\newtheorem{thm}{Theorem}
\newtheorem*{thm*}{Theorem}
\newtheorem{prop}[thm]{Proposition}
\newtheorem{cor}[thm]{Corollary}
\newtheorem{OP}{Open Problem}
\theoremstyle{remark}
\newtheorem{rem}{Remark}
\theoremstyle{definition}
\newcommand{\X}{\mathcal{X}} 
\newcommand{\R}{\mathbb{R}}
\newcommand{\N}{\mathbb{N}}
\newcommand{\Z}{\mathbb{Z}}
\newcommand{\E}{\mathbb{E}}
\newcommand{\diag}{{\rm diag\ }}
\newcommand{\esssup}{\operatornamewithlimits{ess\,sup}}
\newcommand{\e}{{\varepsilon}}
\title{Lower Bounds for the Error of 
Quadrature Formulas for Hilbert Spaces} 
\author{Aicke Hinrichs\footnote{Institut f\"ur Analysis, 
Johannes Kepler Universit\"at Linz, 
Altenbergerstrasse~69, 4040~Linz, Austria, 
Email: \texttt{aicke.hinrichs@jku.at}, \texttt{david.krieg@jku.at}.}
,
David Krieg$^*$,
Erich Novak\footnote{Mathematisches Institut, FSU Jena, 
Ernst-Abbe-Platz 2, 07740 Jena, Germany, 
Email: \texttt{erich.novak@uni-jena.de}.}
, 
Jan Vyb\'iral\footnote{Dept.\ of Mathematics FNSPE, Czech Technical University 
in Prague, Trojanova 13, 12000 Prague, Czech Republic, 
Email: \texttt{jan.vybiral@fjfi.cvut.cz}.
}}
\begin{document}

\date{\today} 

\maketitle

\begin{abstract}
We prove lower bounds for the worst case error of quadrature formulas 
that use given sample points 
$\X_n = \{ x_1, \dots , x_n \}$. 
We are mainly interested in optimal point sets $\X_n$,
but also prove lower bounds that hold
with high probability for sets of independently
and uniformly distributed points.
As a tool,
we use a recent result 
(and extensions thereof) of 
Vyb\'iral on the positive semi-definiteness of certain matrices related
to the product theorem of Schur.
The new technique also works for spaces of analytic functions 
where known methods based on decomposable kernels cannot be applied.
\end{abstract}

{\bf Keywords:} numerical integration in high dimensions, 
curse of dimensionality, positive definite 
matrices, Schur's product theorem.

\thispagestyle{empty}
\newpage

\section{Introduction}\label{Sec:1}

We study error bounds for quadrature formulas and assume 
that the integrand is from a Hilbert space $F$ of 
real valued functions defined
on a set~$D$. 
We assume that function evaluation is continuous and hence are dealing 
with a reproducing kernel Hilbert space (RKHS) $F$ with a kernel $K$. 
We want to compute $S(f)$ for $f \in F$, where $S$ is a 
continuous
linear functional, 
hence 
$S (f)  =\langle f , h \rangle$ 
for some $h\in F$. 
We consider,  
for $c \in \R^n$ and 
$\X_n = \{ x_1, \dots , x_n \} \subset D$,
quadrature formulas $Q_{c, \X_n}: F\to \R$
defined by 
\[
 Q_{c, \X_n}(f)= \sum_{j=1}^n c_j f(x_j).
\]
Then the worst case error (on the unit ball of $F$) of $Q_{c, \X_n}$
is defined by  
\begin{align*}
e(Q_{c, \X_n}, S ) &:=\sup_{||f||\le 1}|S(f)-Q_{c, \X_n}(f)|.
\end{align*} 
If we fix a set $\X_n \subset D$ of sample points we may define 
the radius of information 
$e(\X_n, S)$ by
\[
e(\X_n, S) = \inf_{c\in\R^n} e(Q_{c, \X_n}, S  ) .
\]
Our main interest is in the optimization of $\X_n$ as well as 
of the weights $c$. 
Then we obtain the $n$th minimal worst case error
\[
e(n, S) = \inf_{\X_n\subset D} e(\X_n,S) = \inf_{c\in\R^n} \inf_{\X_n\subset D} e(Q_{c, \X_n},S) .
\]
The minimal number of sample points that are needed 
to achieve a fixed error demand $\e \ge 0$ is described by the number
\[
 n(\e, S) = \min \left\{ n \in \N \mid e(n,S) \le \e  \right\}.
\]

We are mainly interested in 
\emph{tensor product problems}. We will therefore assume that 
$F_i$ is a RKHS
on a domain $D_i$ with kernel $K_i$ for all $i\le d$
and that $\mathbf F_d$ is the tensor product of these spaces.
That is, $\mathbf F_d$ is a RKHS on $\mathbf D_d=D_1\times\cdots\times D_d$
with reproducing kernel
\[
 \mathbf K_d: \mathbf D_d\times \mathbf D_d \to \R,\quad \mathbf K_d(x,y)=\prod_{i=1}^d K_i(x^i,y^i).
\] 
If $h_i\in F_i$ and $S_i(f)=\langle f,h_i\rangle$ 
for $f\in F_i$, 
we will denote by $\mathbf h_d$
the tensor product of the functions $h_i$, i.e.,
$$
\mathbf h_d(t)=(h_1\otimes \dots\otimes h_d)(t)=h_1(t^1)\cdot\ldots\cdot h_d(t^d),
\quad t=(t^1,\dots,t^d)\in  \mathbf D_d. %
$$
We study the tensor product functional $\mathbf S_d=\langle\cdot,\mathbf h_d\rangle$ on $\mathbf F_d$.
Note that
in this paper we assume that $\mathbf S_d$ is a tensor product \emph{functional},
but the results can also be applied to \emph{operators}, see \cite{NW16}.

The complexity of the tensor product problem
is given by the numbers $e(n,\mathbf S_d)$ and $n(\e, \mathbf S_d)$ 
and has been studied in many papers 
for a long time. Traditionally, the functional $\mathbf S_d$ and the dimension~$d$ was fixed 
and the interest was on large~$n$. Here we are mainly interested in 
the \emph{curse of dimensionality}: 
Do we need exponentially many (in $d$) function values 
to obtain an error $\e$ when we fix the error demand and vary the dimension? 

To answer this question one has to prove the corresponding upper or lower bounds. 
\emph{Upper bounds} for specific problems can often be proved by quasi Monte Carlo methods, 
see~\cite{DKS}. In addition there exists a general method, the analysis of the Smolyak algorithm, 
see~\cite{NW10,WW95} and the recent supplement~\cite{NW18}. 

In this paper we concentrate on \emph{lower bounds}, again for a fixed error demand $\e$ 
and (possibly) large dimension. 
Such bounds were first studied in \cite{NSW97} for certain special problems and 
later in \cite{NW01} with the technique of \emph{decomposable kernels}. 
This technique is rather general as long as we consider finite smoothness.
The technique does not work, however, for analytic functions.

In contrast, the approach of \cite{Vy19} %
also works for polynomials and other analytic functions. 
We continue this approach since   %
it opens the door for more lower bounds under general assumptions. 
One result of this paper (Theorem~\ref{thm:abstract-trigonometric-weighted}) reads as follows: 

\theoremstyle{theorem}
\newtheorem*{thm:intro}{Theorem~\ref{thm:abstract-trigonometric-weighted}}
\begin{thm:intro}
For all $i\le d$, let $F_i$ be a RKHS and let $S_i$ be a bounded linear functional on $F_i$ 
with unit norm and nonnegative representer~$h_i$. 
Assume that there are functions $f_i$ and $g_i$ in $F_i$ and a number $\alpha_i\in (0,1]$ such that
$(h_i,f_i,g_i)$ is orthonormal in $F_i$ and $\alpha_i h_i=\sqrt{f_i^2+g_i^2}$.
Then the tensor product problem $\mathbf S_d=S_1\otimes\hdots\otimes S_d$ %
satisfies for all $n\in\N$ that
\[
	e(n,\mathbf S_d)^2 \ge 1-n\prod_{i=1}^d(1+\alpha_i^2)^{-1}.
\]
\end{thm:intro}

In particular, if all the $\alpha_i$'s are equal to some $\alpha>0$ and we want to achieve $e(n,{\mathbf S}_d)\le \varepsilon$
for some $0<\varepsilon<1$,
we obtain
\[
n(\e, \mathbf S_d)\ge (1-\varepsilon^2)(1+\alpha^2)^d.
\]
This implies the curse of dimensionality.
As an application, we use this result to obtain lower bounds for the complexity of the integration
problem on Korobov spaces with increasing smoothness, see Section~\ref{sec:increasing-smoothness}.
These lower bounds complement existing upper bounds from~\cite[Section 10.7.4]{NW10}.

We add in passing that lower bounds of this form are known and much easier 
to prove for quadrature formulas that only have positive weights, 
see Theorem 10.2 of \cite{NW10}.
  
\medskip

The paper is organized as follows. We first
provide a general connection between the worst case error of quadrature formulas
and the positive semi-definiteness of certain matrices in Section~\ref{sec:positive-definiteness}.
We then turn to tensor product problems.
We start with homogeneous tensor products (i.e., all factors $F_i$ and $h_i$ are equal),
see Section~\ref{sec:homo}, where we also consider several examples.
The non-homogeneous case is then discussed in Section~\ref{sec:non-homo}. 
This section also contains the results for Korobov spaces with increasing smoothness.
Section~\ref{sec:homo} and Section~\ref{sec:non-homo} are based on a recent generalization of
Schur's product theorem from~\cite{Vy19}.
In Section~\ref{sec:new-schur}, we discuss further generalizations of Schur's theorem
and possible applications to numerical integration.
Finally, in Section~\ref{sec:random}, we consider lower bounds for the error of quadrature formulas 
that use random point sets (as opposed to optimal point sets).
This allows us to approach situations where we conjecture but cannot prove
the curse of dimensionality for optimal point sets.

\section{Lower bounds and positive definiteness}
\label{sec:positive-definiteness}

We begin with a somewhat surprising result:
Lower bounds for the worst case error of quadrature formulas are equivalent 
to the statement that certain matrices are positive semi-definite. 

\begin{prop}\label{prop:error_vs_definiteness}
Let $F$ be a RKHS on $D$ with kernel $K$ and let $S =\langle \cdot, h \rangle$ 
for some $h\in F$ and $\alpha>0$.
\begin{enumerate}
\item[(i)] The following statements are equivalent for all %
$\X_n=\{x_1,\dots,x_n\} \subset D$.
\begin{itemize}
\item The matrix $\displaystyle \big( K(x_j,x_k) 
- \alpha h(x_j)h(x_k)\big)_{j,k\leq n}$ is positive semi-definite,
\item $\displaystyle e( \X_n ,S)^2 \geq \Vert h \Vert^2 - \alpha^{-1}$.
\end{itemize}
\item[(ii)] The following statements are equivalent for all %
$n\in \N$.
\begin{itemize}
\item The matrix $\displaystyle \big( K(x_j,x_k) 
- \alpha h(x_j)h(x_k)\big)_{j,k\leq n}$ %
is positive 
semi-definite for all $x_1,\hdots,x_n\in D$,
\item $\displaystyle e(n,S)^2 \geq \Vert h \Vert^2 - \alpha^{-1}$.
\end{itemize}
\end{enumerate}
\end{prop}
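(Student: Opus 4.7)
My plan is to reduce both parts to a short computation based on the reproducing property and then invoke an elementary quadratic identity. Writing $Q_{c,\X_n}(f) = \langle f, \eta_c\rangle$ for $\eta_c := \sum_{j=1}^n c_j K(\cdot, x_j)$, the error functional $S-Q_{c,\X_n}$ is represented by $h-\eta_c$ and therefore has norm $\|h-\eta_c\|$. Expanding this norm yields the standard identity
\[
 e(Q_{c,\X_n},S)^2 \;=\; \|h\|^2 \,-\, 2\, c^{\top} v \,+\, c^{\top} K_n\, c,
\]
where $v := (h(x_j))_{j\le n}\in\R^n$ and $K_n := (K(x_j,x_k))_{j,k\le n}$. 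Taking the infimum over $c\in\R^n$, the bound $e(\X_n,S)^2\ge \|h\|^2 - \alpha^{-1}$ in part (i) becomes
\[
 \text{(A)}\qquad c^{\top} K_n c - 2\, c^{\top}v + \alpha^{-1} \,\ge\, 0 \quad\text{for all } c\in\R^n,
\]
while positive semi-definiteness of $K_n - \alpha\,v v^{\top}$ reads
\[
 \text{(B)}\qquad c^{\top} K_n c - \alpha\,(c^{\top}v)^2 \,\ge\, 0 \quad\text{for all } c\in\R^n.
\]

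The heart of the proof is the equivalence (A)$\Leftrightarrow$(B), which I would deduce from the elementary completed-square identity $\alpha t^2 - 2t + \alpha^{-1} = (\sqrt{\alpha}\,t - \alpha^{-1/2})^2 \ge 0$, applied with $t = c^{\top}v$. For (B)$\Rightarrow$(A) this chains directly into $2 c^{\top}v \le \alpha(c^{\top}v)^2 + \alpha^{-1} \le c^{\top} K_n c + \alpha^{-1}$. For the converse I would substitute $tc$ for $c$ in (A) and view the result as a nonnegative quadratic in $t\in\R$; optimizing over $t$ yields $\alpha(c^{\top}v)^2 \le c^{\top} K_n c$, i.e.\ (B). The degenerate case $c^{\top} K_n c=0$ requires one extra line: since $K_n$ is the Gram matrix of the family $\{K(\cdot,x_j)\}$ in $F$, the vanishing of this form forces $\eta_c=0$ and hence $c^{\top}v = \langle h,\eta_c\rangle = 0$, so (B) holds trivially.

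Part (ii) is then immediate from part (i) by taking the infimum over $\X_n\subset D$: $e(n,S)^2\ge \|h\|^2-\alpha^{-1}$ is equivalent to $e(\X_n,S)^2\ge \|h\|^2-\alpha^{-1}$ for every choice of $\X_n$, which by part (i) is exactly the PSD condition holding for every $n$-tuple $x_1,\dots,x_n\in D$.

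I do not expect a real obstacle here; once the worst case error is expressed in coordinates, the proposition is essentially a completed-square identity translated into the RKHS language. The only mildly subtle point is the case of a singular kernel matrix in the direction (A)$\Rightarrow$(B), which is handled by the Gram-matrix observation above.
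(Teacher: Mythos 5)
Your proof is correct and follows essentially the same route as the paper: both expand $e(Q_{c,\X_n},S)^2=\|h\|^2-2c^{\top}v+c^{\top}K_nc$ via the representer $h-\sum_j c_jK(\cdot,x_j)$ and then compare the quadratic conditions by optimizing along the ray $\{tc: t\in\R\}$, your discriminant argument in $t$ being the same step as the paper's explicit minimization over the scaling parameter $a$ (with your Gram-matrix remark covering the paper's $0/0$ convention). Part (ii) is handled identically, by taking the infimum over all point sets.
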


\begin{proof} To prove the first part, we fix $\X_n=\{x_1,\dots,x_n\}\subset D$.
For $c \in \R^n$ consider the quadrature rule $Q_{c,\X_n}: F\to \R$ with
\[
 Q_{c,\X_n}(f)= \sum_{j=1}^n c_j f(x_j).
\]
Clearly, we have
\begin{align*}
e(Q_{c,\X_n},S)^2&=\sup_{||f||\le 1} \big|S(f)-Q_{c,\X_n}(f)\big|^2
=\biggl\|h-\sum_{j=1}^n c_j K(x_j,\cdot)\biggr\|^2\\
&=\Vert h \Vert^2-2\sum_{j=1}^n c_j h(x_j)+\sum_{j,k=1}^nc_jc_k K(x_j,x_k).
\end{align*}
The function $g: \R \to \R$ with $g(a)=e(Q_{ac,\X_n},S)^2$ attains its minimum for
\[
 a=\frac{\sum_{j=1}^n c_j h(x_j)}{\sum_{j,k=1}^nc_jc_k K(x_j,x_k)},
\]
where $0/0$ is interpreted as $0$. This yields
\[
e(\X_n,S)^2
=\inf_{c\in \R^n}\inf_{a\in \R} e(Q_{ac,\X_n},S)^2
=\Vert h \Vert^2-\sup_{c\in \R^n}\frac{\left(\sum_{j=1}^n c_j 
h(x_j)\right)^2}{\sum_{j,k=1}^nc_jc_k K(x_j,x_k)}.
\]
The last expression is larger or equal to $\Vert h\Vert^2-\alpha^{-1}$ if, and only if, 
\[
\sum_{j,k=1}^nc_jc_k K(x_j,x_k) \geq \alpha \left(\sum_{j=1}^n c_j h(x_j)\right)^2
\]
holds for all $c\in \R^n$, i.e. when the matrix $\big( K(x_j,x_k) 
- \alpha h(x_j)h(x_k)\big)_{j,k\leq n}$ is positive semi-definite.
This yields the statement.

The proof of the second part follows from the first part 
by taking the infimum over all $\X_n=\{x_1,\dots,x_n\}\subset D.$
\end{proof}

The idea now is to use some properties of the Schur product 
(often also called Hadamard or entrywise product) of matrices.
We denote by $\diag M=(M_{1,1},\dots,M_{n,n})^T$ the diagonal entries of $M$ 
whenever $M\in \R^{n\times n}$. 
Moreover, if $A,B\in\R^{n\times n}$ are two symmetric matrices, 
we write $A\succeq B$ if $A-B$ is positive semi-definite.
The Schur product of $A$ and $B$ 
is the matrix $A\circ B$ with
\[
(A\circ B)_{i,j}=A_{i,j}B_{i,j}\quad\text{for}\quad i,j\le n.
\]
The classical Schur product theorem states that the Schur product of two positive semi-definite matrices
is again positive semi-definite.
However, this statement can be improved \cite{Vy19} when $A=B$.

\begin{prop}\label{prop:main2} 
Let $M\in\R^{n\times n}$ be a positive semi-definite matrix. Then
$$
M\circ M\succeq \frac{1}{n}(\diag M)(\diag M)^T.
$$
\end{prop}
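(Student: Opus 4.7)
The plan is to unfold the quadratic form $c^T(M\circ M)c$ in a Frobenius-inner-product picture and then extract the bound from a rank-bounded Cauchy--Schwarz inequality.

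First I would rephrase the statement in vector form: $M\circ M \succeq \frac{1}{n}(\diag M)(\diag M)^T$ is equivalent to
\[
\sum_{i,j=1}^n c_i c_j M_{ij}^2 \;\ge\; \frac{1}{n}\Bigl(\sum_{i=1}^n c_i M_{ii}\Bigr)^2 \qquad \text{for all } c\in\R^n.
\]
Using that $M$ is positive semi-definite, I would write $M=A^T A$ for some matrix $A$ and denote by $a_1,\ldots,a_n$ the columns of $A$, so that $M_{ij}=\langle a_i,a_j\rangle$ and $M_{ii}=\|a_i\|^2$.

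The key observation is that the squares of the entries of a Gram matrix can be linearized as Frobenius inner products of rank-one tensors:
\[
M_{ij}^2 \;=\; \langle a_i,a_j\rangle^2 \;=\; \operatorname{tr}\bigl(a_i a_i^T\, a_j a_j^T\bigr) \;=\; \langle a_i a_i^T, a_j a_j^T\rangle_F.
\]
Setting $X:=\sum_{i=1}^n c_i\, a_i a_i^T$, this identity turns the left-hand side into $\|X\|_F^2$, while the right-hand side becomes $\frac{1}{n}\operatorname{tr}(X)^2$ because $\operatorname{tr}(a_i a_i^T)=\|a_i\|^2=M_{ii}$. So everything reduces to the clean inequality
\[
\|X\|_F^2 \;\ge\; \frac{1}{n}\operatorname{tr}(X)^2.
\]

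The last step is a Cauchy--Schwarz argument on the spectrum of $X$. Since $X$ is a sum of $n$ rank-one matrices, its rank is at most $n$, so $X$ has at most $n$ nonzero (real) eigenvalues $\lambda_1,\ldots,\lambda_r$ with $r\le n$. Then
\[
\operatorname{tr}(X)^2 \;=\; \Bigl(\sum_{k=1}^r \lambda_k\Bigr)^2 \;\le\; r\sum_{k=1}^r \lambda_k^2 \;\le\; n\,\|X\|_F^2,
\]
which finishes the proof. The only mildly delicate point is noting that this Cauchy--Schwarz step works even though $X$ need not be positive semi-definite (the $c_i$ can be negative), precisely because we apply it to the nonzero eigenvalues and use that $X$ is symmetric with rank at most $n$; this is the one place where the factor $1/n$ on the right-hand side enters, and I expect it to be the only substantive step in the argument.
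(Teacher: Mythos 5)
Your argument is correct: the identity $M_{ij}^2=\langle a_ia_i^T,a_ja_j^T\rangle_F$ turns the quadratic form into $\Vert X\Vert_F^2$ with $X=\sum_i c_i a_ia_i^T$, and since $X$ is symmetric of rank at most $n$, Cauchy--Schwarz on its nonzero eigenvalues gives $\operatorname{tr}(X)^2\le n\Vert X\Vert_F^2$, which is exactly the claim; the sign issue you flag is indeed harmless. This is, however, a different route from the one the paper presents: the paper cites \cite{Vy19} for a direct proof (which expands $M=AA^T$ entrywise, discards the off-diagonal cross terms as squares, and applies Cauchy--Schwarz over the $n$ columns -- compare the proof of Theorem~\ref{thm:2}) and then sketches an alternative argument via Schoenberg's theory: representing a correlation matrix as a Gram matrix of points on $\mathbb{S}^{n-1}$ and observing that $t^2-\frac1n$ is, up to a positive constant, the Gegenbauer polynomial $\mathbf C_2^{(n-2)/2}(t)$, so the Addition Theorem gives positive semi-definiteness. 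Your Frobenius-norm formulation is arguably cleaner than the entrywise expansion and, unlike it, keeps the cross terms; a pleasant bonus is that it immediately yields the sharper constant of Theorem~\ref{thm:1}: since the $a_i$ span a space of dimension $\operatorname{rank}(M)=r$, you get $\operatorname{rank}(X)\le r$ and hence the bound with $\frac1r$ in place of $\frac1n$, with no extra work. The paper's Gegenbauer route buys something else, namely a conceptual explanation of the constant (it is dictated by the expansion of $t^2$ in Gegenbauer polynomials) and a natural path to further variants via positive definite functions on spheres.
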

A direct proof of Proposition \ref{prop:main2} may be found in \cite{Vy19}.
As pointed out to the authors by Dmitriy Bilyk, the result follows also from the theory
of positive definite functions on the spheres as developed in the classical work of Schoenberg \cite{Schoen}.
To sketch this approach, let $(\mathbf C_k^{\lambda}(t))_{k=0}^\infty$ denote the sequence of Gegenbauer (or ultraspherical) polynomials.
These are polynomials of order $k$ on $[-1,1]$, which are
orthonormal with respect to the weight \mbox{$(1-t^2)^{\lambda-1/2}$}. Here, $\lambda>-1/2$ is a real parameter.
By the Addition Theorem \cite[Theorem 9.6.3]{Special}, there is a positive constant $C_{k,n}$
and a natural number $c_{k,n}$, both only depending on $k$ and $n$, such that
\begin{equation}\label{eq:Gegen1}
\mathbf C_k^{(n-2)/2}(\langle x,y\rangle)=C_{k,n}\sum_{l=1}^{c_{k,n}} S_{k,l}(x)S_{k,l}(y),\quad x,y\in{\mathbb S}^{n-1}.
\end{equation}
Here, ${\mathbb S}^{n-1}$ stands for the unit sphere in $\R^n$ and $S_{k,1},\dots,S_{k,c_{k,n}}$
form an orthonormal basis of the space of harmonic polynomials of degree $k$ in $\R^n$.

If now $X=(x_{i,j})_{i,j=1}^n\in\R^{n\times n}$ is a positive semi-definite matrix with ones on the diagonal
and $f(t)=\sum_{k=0}^\infty a_k \mathbf C_{k}^{(n-2)/2}(t)$ with $a_k\ge 0$, then
$(f(x_{i,j}))_{i,j=1}^n$ is also positive semi-definite. Indeed, we can write $x_{i,j}=\langle x_i,x_j\rangle$ for some vectors
$x_1,\dots,x_n\in{\mathbb S}^{n-1}$ and use \eqref{eq:Gegen1} to compute for every $c\in\R^n$
\begin{align*}
\sum_{i,j=1}^n c_ic_j f(x_{i,j})&=\sum_{i,j=1}^n c_ic_j\sum_{k=0}^\infty a_k \mathbf C_k^{(n-2)/2}(\langle x_i,x_j\rangle)\\
&=C_{k,n}\sum_{i,j=1}^n c_ic_j\sum_{k=0}^\infty a_k \sum_{l=1}^{c_{k,n}} S_{k,l}(x_i)S_{k,l}(x_j)\\
&=C_{k,n}\sum_{k=0}^\infty a_k \sum_{l=1}^{c_{k,n}} \Bigl(\sum_{i=1}^n c_i S_{k,l}(x_i)\Bigr)^2\ge 0.
\end{align*}
For positive semi-definite matrices $M\in\R^{n\times n}$ with ones on the diagonal, Proposition \ref{prop:main2} then follows by
observing that $f(t)=t^2-\frac{1}{n}$ is (up to a positive multiplicative constant) exactly the polynomial $\mathbf C_2^{(n-2)/2}(t).$
Finally, the general form of Proposition \ref{prop:main2} is given by a simple scaling argument.
$\hfill \square$

\section{Homogeneous tensor products}\label{sec:homo}

We now use 
Propositions~\ref{prop:error_vs_definiteness}
and \ref{prop:main2} in order to obtain 
the curse of dimensionality for certain tensor product (integration)
problems.
In this section, we consider homogeneous tensor products, i.e.,
$\mathbf F_d = F_1\otimes\cdots\otimes F_1$ and $\mathbf h_d=h_1\otimes\cdots\otimes h_1$.

\begin{thm}
\label{thm:curse_tensor_product}
Let $F_1$ be a RKHS on $D_1$. 
Assume that there are functions $e_1$ and $e_2$ on $D_1$
such that $e_1^2, e_2^2$ and $\sqrt{2} e_1 e_2$ 
are orthonormal in $F_1$ and let
$S_1$ be a linear functional with 
$S_1(e_i^2)=\sqrt 2 /2$
and $S_1(e_1 e_2)=0$.
Then the tensor product problem $\mathbf S_d$ 
satisfies
\[
	e(n,\mathbf S_d)^2 \geq 1 - n \,  2^{-d} .
 \]
In particular, it suffers from the curse of dimensionality
since we need at least $2^d \, (1-\varepsilon^2 )$ 
function values to achieve the error $\varepsilon$.   
\end{thm}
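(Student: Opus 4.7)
The plan is to apply Proposition~\ref{prop:error_vs_definiteness}(ii) with the specific choice $\alpha = 2^d/n$. For that choice, $\Vert \mathbf{h}_d\Vert^2 - \alpha^{-1} = 1 - n\cdot 2^{-d}$ provided $\Vert \mathbf{h}_d\Vert^2 = 1$, so the claim will follow once we confirm this norm identity and show that the matrix
\[
  \bigl(\mathbf{K}_d(\mathbf{x}_j,\mathbf{x}_k) - (2^d/n)\,\mathbf{h}_d(\mathbf{x}_j)\mathbf{h}_d(\mathbf{x}_k)\bigr)_{j,k=1}^n
\]
is positive semi-definite for every choice of $\mathbf{x}_1,\dots,\mathbf{x}_n\in\mathbf{D}_d$.

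First I would reduce to a three-dimensional factor. Restricting $S_1$ to any closed subspace of $F_1$ only shrinks the unit ball and hence the worst-case error, so replacing $F_1$ by $\tilde F_1 := \mathrm{span}\{e_1^2,\, e_2^2,\, \sqrt 2\, e_1 e_2\}$ with its inherited inner product yields a lower bound for $\tilde F_1^{\otimes d}$ that transfers to the original problem. On $\tilde F_1$ the Riesz representer of (the restriction of) $S_1$ is the orthogonal projection of the original $h$, and the hypotheses compute it explicitly as $h = (\sqrt 2/2)(e_1^2 + e_2^2)$ (the coefficient of $\sqrt 2\, e_1 e_2$ equals $S_1(\sqrt 2\, e_1 e_2) = 0$). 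Thus $\Vert h\Vert^2 = 1$ and therefore $\Vert \mathbf{h}_d\Vert^2 = 1$.

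Next I would uncover the ``square-root'' structure that drives everything. Expanding the reproducing kernel of $\tilde F_1$ over its ONB gives
\[
  K(x,y) = M(x,y)^2, \qquad M(x,y) := e_1(x)e_1(y) + e_2(x)e_2(y),
\]
and simultaneously $h(x) = (1/\sqrt 2)\,M(x,x)$. Since $M$ is the inner product of the feature map $x\mapsto(e_1(x),e_2(x))\in\R^2$, every matrix $M_i := \bigl(M(x_j^i,x_k^i)\bigr)_{j,k}$ is positive semi-definite. Setting $N_{j,k} := \prod_{i=1}^d M(x_j^i,x_k^i)$, the classical Schur product theorem makes $N = M_1\circ\cdots\circ M_d$ positive semi-definite as well.

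Finally, the tensor calculation collapses: $\mathbf{K}_d(\mathbf{x}_j,\mathbf{x}_k) = N_{j,k}^2 = (N\circ N)_{j,k}$ and $\mathbf{h}_d(\mathbf{x}_j)\mathbf{h}_d(\mathbf{x}_k) = 2^{-d}N_{j,j}N_{k,k}$, so with $\alpha = 2^d/n$ the matrix in question is exactly
\[
  N\circ N - \frac{1}{n}\,(\diag N)(\diag N)^T,
\]
which is positive semi-definite by Proposition~\ref{prop:main2} applied to the PSD matrix $N$. Invoking Proposition~\ref{prop:error_vs_definiteness}(ii) then yields $e(n,\mathbf{S}_d)^2 \ge 1 - n\cdot 2^{-d}$. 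The main conceptual step is recognizing the factorization $K = M^2$ together with $h$ being a rescaling of $\diag M$: this is precisely the alignment under which Proposition~\ref{prop:main2}, a comparison between $M\circ M$ and $(\diag M)(\diag M)^T$, can be applied verbatim after tensorization; everything else is bookkeeping.
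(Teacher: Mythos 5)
Your proposal is correct and follows essentially the same route as the paper: reduce to the three\-dimensional span of $e_1^2$, $e_2^2$, $\sqrt2\,e_1e_2$, observe the factorization $K_1=M_1^2$ with $M_1(x,y)=e_1(x)e_1(y)+e_2(x)e_2(y)$ and $h_1(x)=2^{-1/2}M_1(x,x)$, tensorize, and apply Proposition~\ref{prop:main2} to the Gram matrix of $\mathbf M_d$ followed by Proposition~\ref{prop:error_vs_definiteness} with $\alpha=2^d/n$. Your explicit appeal to the classical Schur product theorem for the positive semi-definiteness of $N=M_1\circ\cdots\circ M_d$ and the explicit subspace-restriction argument are just spelled-out versions of steps the paper treats implicitly (via the tensor-product kernel structure and the ``without loss of generality'' reduction), so there is no substantive difference.
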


\begin{rem}
We usually work with normalized problems, i.e., 
we assume that $e(0,S_1) = \Vert  h_1 \Vert = 1$
and thus $e(0,\mathbf S_d) = \Vert\mathbf  h_d \Vert = 1$.
Note that the only normalized functional $S_1$ that satisfies 
the above conditions is given by 
$S_1=\langle \cdot, h_1 \rangle$ 
with the representer $h_1= \frac{1}{2} \sqrt{2} ( e_1^2 + e_2^2)$.
\end{rem}

\begin{proof}
Without loss of generality, we may assume that $F_1$ is 3-dimensional, i.e.,
that $e_1^2$, $e_2^2$ and $\sqrt{2}e_1e_2$ form an orthonormal basis.
Then also $b_1 %
=  \frac{1}{2} \sqrt{2} (e_1^2 + e_2^2)$, 
$b_2= \frac{1}{2} \sqrt{2} (e_1^2 - e_2^2)$, 
and $b_3 = \sqrt{2} e_1 e_2$ 
is an orthonormal basis since $b_1$ and $b_2$ are just a rotation of $e_1^2$ and $e_2^2$.
On the 3-dimensional space $F_1$, the functional $S_1$ is represented by $h_1=b_1$.
The function 
\[
M_1\colon D_1\times D_1 \to \R, \quad
M_1(x,y)= \sum_{i=1}^2 e_i(x) e_i(y),
\] 
is a reproducing kernel on $D_1$.
The reproducing kernel $K_1$ of $F_1$ satisfies
\[
K_1(x,y) = \sum_{i=1}^3 b_i(x) b_i(y) = \left( \sum_{i=1}^2 e_i(x) e_i(y) \right)^2 = M_1(x,y)^2
\]
for all $x,y\in D_1$. Moreover, we have 
$h_1(x) = \frac{1}{2} \sqrt{2}  M_1(x,x)$ for all $x\in D_1$.
Therefore, also $\mathbf K_d(x,y)= M_d(x,y)^2$ and 
	$\mathbf h_d(x)=  2^{-d/2} \,\mathbf  M_d(x,x)$ for $x,y\in \mathbf D_d$,
where $\mathbf M_d$ is the $d$-times tensor product of $M_1$
and $\mathbf h_d$ is the $d$-times tensor product of $h_1$. 
By Proposition~\ref{prop:main2} 
the matrix 
\[
\big(\mathbf  K_d(x_j,x_k) - n^{-1} \, 2^d \, 
 \mathbf h_d(x_j)\mathbf h_d(x_k)\big)_{j,k\leq n}
\]
is positive semi-definite for all $x_1,\hdots,x_n\in \mathbf D_d$. 
Proposition~\ref{prop:error_vs_definiteness} yields that
\[
	e(n,\mathbf S_d)^2 \geq 1 - n \, 2^{-d} .
\]
\end{proof}

We now consider several applications of this result
and start with a general remark. As in the proof of 
Theorem~\ref{thm:curse_tensor_product}
these examples are finite-dimensional, 
$F_1$ has dimension three. 
Of course the lower bounds are valid for all larger Hilbert spaces 
with the same norm on the subspace from $F_1$. 
A RKHS is equivalently given by the scalar product or the kernel 
or a complete orthonormal system. 
Unfortunately, there is no simple way to compute the scalar product 
if the kernel is given, or vice versa. 
Because of the form of our result, it is convenient to start with 
three vectors $b_i$ from $F_1$ and to claim that they are orthonormal. 
This \emph{defines} the scalar product (and the kernel) for this three-dimensional 
space, though it is possible to extend the scalar product to larger spaces 
in many different ways. 
Because of the very specific form of the orthonormal system that is required in our result, 
we are \emph{not} free to choose the scalar product and hence, 
later, will have to work with Sobolev spaces with a non-standard 
norm or scalar product.

This means that all examples of this section will be
specified only by defining two functions $e_1$ and $e_2$.
These functions immediately define a 3-dimensional space $F_1$
(with orthonormal basis $e_1^2$, $e_2^2$ and $\sqrt 2 e_1 e_2$
or equivalently $b_1$, $b_2$ and $b_3$ as above)
and a linear functional $S_1$ with representer $h_1=b_1$ on $F_1$.
There are always 
many ways of writing down the norm of $F_1$
(and thus many ways to interpret $F_1$ as a subspace of some larger space)
and we will provide some of them.
But since this is just for the purpose of interpretation,
we will not always provide the (tedious) computations.

\subsection{Trigonometric polynomials of degree 1}\label{sec:trig}

This example is already contained in  Vyb\'\i ral~\cite{Vy19}; 
now we can see it as an application 
of the general Theorem~\ref{thm:curse_tensor_product}. 
It is defined by the choice
$e_1(x)= 2^{1/4} \cos(\pi x)$ 
and $e_2(x)=2^{1/4} \sin(\pi x)$ on $[0,1]$. 
Then one obtains
$b_1 = h_1 = 1$
and
$b_2 (x) = \cos (2 \pi x)$ and
$b_3 (x) = \sin (2 \pi x)$.
The functions $b_i$ are orthonormal by definition
and we have many ways to define 
matching norms on larger spaces. 
One way to write down the norm
on the space $F_1$ of
trigonometric polynomials of degree 
1 on the interval $[0,1]$ is  
\[
\Vert f \Vert^2 = \Vert f \Vert_2^2 + \frac{1}{4 \pi^2} \Vert f'\Vert_2^2 .
\]
One only has to check that the $b_i$'s indeed are orthonormal with 
respect to this Sobolev Hilbert space.
For $d \in \N$ we take the tensor product space 
of the three-dimensional $F_1$
with the kernel 
\[
\mathbf K_d (x,y) = \prod_{i=1}^d (1+ \cos ( 2\pi  (x_i-y_i)) ) .
\] 
We obtain 
$
\Gamma_d  = \sup_x \mathbf K_d (x,x)^{1/2} = 2^{d/2}  
$
and $\Gamma_d$ is the norm 
of the embedding of $\mathbf F_d$ into the 
space of
continuous functions 
with the sup norm, see Lemma~5 of~\cite{NUWZ18}.
Hence functions in the unit ball of $\mathbf F_d$ may take large values 
if $d$ is large, but the integral is bounded by one. 
By applying Theorem~\ref{thm:curse_tensor_product} 
we obtain the following result of \cite{Vy19} that 
solved an
open problem of 
\cite{Erich}, see also \cite{HV}. 

\begin{cor}
\label{cor1} 
Let $F_1$ be the RKHS on $[0,1]$ with the orthonormal system
$1$, $\cos (2\pi x)$ and $\sin (2\pi x)$.  
Then the integration problem $\mathbf S_d=\langle \cdot, 1 \rangle$ 
on the tensor product space $\mathbf F_d$ satisfies
\[
	e(n,\mathbf S_d)^2 \geq 1 - n \,  2^{-d} .
 \]
In particular, it suffers from the curse of dimensionality.  
\end{cor}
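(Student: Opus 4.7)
The plan is to realize Corollary~\ref{cor1} as a direct special case of Theorem~\ref{thm:curse_tensor_product}. To do this, I need to produce functions $e_1,e_2$ on $[0,1]$ whose squares (together with $\sqrt{2}e_1e_2$) span the given orthonormal system $\{1,\cos(2\pi x),\sin(2\pi x)\}$, and such that the integration functional represents the right coordinates in the $(e_i^2, e_1e_2)$-expansion.

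First I would set $e_1(x) = 2^{1/4}\cos(\pi x)$ and $e_2(x) = 2^{1/4}\sin(\pi x)$, as suggested in the discussion just before the corollary. The double-angle identities give
\[
e_1(x)^2 = \tfrac{\sqrt{2}}{2}\bigl(1+\cos(2\pi x)\bigr),\qquad e_2(x)^2 = \tfrac{\sqrt{2}}{2}\bigl(1-\cos(2\pi x)\bigr),\qquad \sqrt{2}\,e_1(x)e_2(x) = \sin(2\pi x),
\]
which are unitary rotations of the asserted orthonormal system $\{1,\cos(2\pi x),\sin(2\pi x)\}$. Consequently $(e_1^2, e_2^2, \sqrt{2}e_1e_2)$ is itself an orthonormal triple in $F_1$, verifying the hypothesis of Theorem~\ref{thm:curse_tensor_product}.

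Next I would check that the integration functional $S_1 = \langle\cdot,1\rangle$ satisfies the required coordinate conditions. Since $\int_0^1 \cos^2(\pi x)\,dx = \int_0^1 \sin^2(\pi x)\,dx = 1/2$ and $\int_0^1 \cos(\pi x)\sin(\pi x)\,dx = 0$, one reads off $S_1(e_1^2) = S_1(e_2^2) = \sqrt{2}/2$ and $S_1(e_1e_2) = 0$. The representer of $S_1$ in $F_1$ is therefore $h_1 = \tfrac{1}{2}\sqrt{2}(e_1^2+e_2^2) = 1$, as expected. With these ingredients in place, a direct appeal to Theorem~\ref{thm:curse_tensor_product} yields $e(n,\mathbf{S}_d)^2 \ge 1 - n\cdot 2^{-d}$, and the curse of dimensionality follows by the same argument as after the theorem.

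There is no real obstacle beyond identifying the correct $e_1,e_2$; the only subtlety is the verification that the Hilbert-space structure given by declaring $\{1,\cos(2\pi x),\sin(2\pi x)\}$ orthonormal is compatible with the orthonormality of $\{e_1^2, e_2^2, \sqrt{2}e_1e_2\}$, which is automatic because the two systems are related by an orthogonal change of basis. If one wishes to present $F_1$ concretely as a subspace of a Sobolev space with norm $\|f\|^2 = \|f\|_2^2 + (4\pi^2)^{-1}\|f'\|_2^2$, one can additionally verify by direct computation that this norm also makes $\{1,\cos(2\pi x),\sin(2\pi x)\}$ orthonormal, but this is only for interpretation and is not needed for the lower bound itself.
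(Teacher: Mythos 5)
Your proof is correct and follows essentially the same route as the paper: Section 3.1 of the paper applies Theorem~\ref{thm:curse_tensor_product} with exactly the choice $e_1(x)=2^{1/4}\cos(\pi x)$, $e_2(x)=2^{1/4}\sin(\pi x)$, noting that $(e_1^2,e_2^2,\sqrt{2}e_1e_2)$ is an orthogonal rotation of $(1,\cos(2\pi x),\sin(2\pi x))$ and that $h_1=1$. Your verification of $S_1(e_i^2)=\sqrt{2}/2$ and $S_1(e_1e_2)=0$ via the expansion coefficients (equivalently the integrals) matches the paper's argument, so there is nothing to add.
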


\begin{rem} 
The same vector space with dimension $3^d$ was studied  
by Sloan and Wo\'zniakowski~\cite{SW97} who 
were mainly interested in the Korobov class $E_{\alpha, d}$ given by 
all functions with Fourier coefficients 
$\hat f (h)$ such that 
$$
| \hat f (h) | \le  \prod_{i=1}^d (\max (1, |h_i|))^{-\alpha},
$$
where $\alpha$ might be large. 
The authors of \cite{SW97} proved that the optimal worst case error 
for this class is 1 for $n<2^d$.
This holds for the whole Korobov class 
and also for the subset of trigonometric polynomials of degree 1
(which is larger than the unit ball 
considered in Corollary~\ref{cor1}). 
For these polynomials 
the error is zero for $n=2^d$, since a product rule 
with $2^d$ evaluations is exact. 

As a by-product the authors of \cite{SW97} also obtain the fact that exactly $2^d$ 
function values are needed to obtain an exact quadrature formula for 
this space of polynomials. 
This last property also follows from Corollary~\ref{cor1}
and is of course independent of the used norm.  
\end{rem} 

\subsection{Gaussian integration for polynomials of degree 2}

Let $F_1$ be the space of polynomials on $\R$ with degree at most 2, equipped with the scalar product
\[
 \langle f, g \rangle = f(0)g(0) +  \frac12 f'(0)g'(0) + \frac14 \int_\R f''(x) g''(x)\,{\rm d}\mu_1(x),
\]
where $\mu_1$ is the standard Gaussian measure on $\R$.
We consider the integration problem
\[
 S_1\colon F_1 \to \R, \quad S_1(f)=\int_\R f(x)\,{\rm d}\mu_1(x).
\] 
The tensor product problem for $d\in\N$ is given by the functional
\[
 \mathbf S_d\colon F_d \to \R,\quad  \mathbf S_d(f)=\int_{\R^d} f(x)\,{\rm d}\mu_d(x),
\] 
on the tensor product space $\mathbf F_d$,
which consists of all $d$-variate polynomials of 
degree 2 or less in every variable.
Here, $\mu_d$ is the standard Gaussian measure on $\R^d$.
By Theorem~\ref{thm:curse_tensor_product}, 
this problem suffers from the curse of dimensionality.
To see this, it is enough to choose $e_1(x)=1$ and $e_2(x)=x$.
We observe that the functions
$e_1^2=1$, $e_2^2=x^2$ and $\sqrt 2 e_1e_2=\sqrt 2 x$
are orthonormal in $F_1$
and that $S_1(e_i^2)=1$ and $S_1(e_1 e_2)=0$.
For this, note that $S_1(x^j)$ is the $j$th 
moment of a standard Gaussian variable.
In particular, $e(0,S_1)=\sqrt{2}$.
Thus, the functional $S_1'=2^{-1/2} S_1$ satisfies the 
conditions of Theorem \ref{thm:curse_tensor_product} 
and an application of the theorem immediately yields the following.

\begin{cor} 
	Take the RKHS $F_1$ on $\R$ which is generated by 
	the  orthonormal system 
	$1$, $x^2$ and $\sqrt{2} x$. 
	Then the problem $\mathbf S_d(f)=\int_{\R^d} f(x)\,d\mu_d(x)$ of Gaussian integration on
	the tensor product space $\mathbf F_d$ satisfies
	$$
	\frac{e(n,\mathbf S_d)^2}{e(0,\mathbf S_d)^2} \ge 1 - n \, 2^{-d}.
	$$
	In particular, we obtain the curse of dimensionality 
	and the fact that exactly $n=2^d$ function values 
	are needed for exact integration. 
\end{cor}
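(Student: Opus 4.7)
The plan is to invoke Theorem~\ref{thm:curse_tensor_product} with the choice $e_1(x)=1$ and $e_2(x)=x$ already suggested in the paragraph preceding the corollary. With this choice, $(e_1^2,e_2^2,\sqrt 2\,e_1e_2)=(1,x^2,\sqrt 2\,x)$ is precisely the orthonormal system defining $F_1$, so the orthonormality hypothesis of the theorem holds by construction and no computation of inner products is required.

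The remaining hypothesis concerns the values of the functional on this system. I would compute $S_1(1)$, $S_1(x)$, $S_1(x^2)$ as the zeroth, first and second moments of the standard Gaussian, obtaining $1$, $0$ and $1$ respectively. These are not the values $\sqrt 2/2$, $0$, $\sqrt 2/2$ required by the theorem, so I would rescale to $S_1':=S_1/\sqrt 2$, whose representer has unit norm and which now satisfies the theorem's hypotheses exactly. Applying Theorem~\ref{thm:curse_tensor_product} to the tensor product $\mathbf S_d':=(S_1')^{\otimes d}=2^{-d/2}\,\mathbf S_d$ yields $e(n,\mathbf S_d')^2\ge 1-n\cdot 2^{-d}$.

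Because the error is positively homogeneous of degree one in the functional, the ratio $e(n,\mathbf S_d)^2/e(0,\mathbf S_d)^2$ is scale-invariant and equals $e(n,\mathbf S_d')^2/e(0,\mathbf S_d')^2=e(n,\mathbf S_d')^2$, since $e(0,\mathbf S_d')=\Vert\mathbf h_d'\Vert=1$. This transports the lower bound back to $\mathbf S_d$ and gives the claimed ratio inequality. The curse of dimensionality is then immediate: achieving relative error $\varepsilon<1$ requires $n\ge(1-\varepsilon^2)\,2^d$.

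For the matching statement that exactly $n=2^d$ nodes suffice for exact integration, the lower bound forces $e(n,\mathbf S_d)>0$ whenever $n<2^d$, while the $d$-fold tensor product of the two-node Gauss--Hermite rule is exact on all univariate polynomials of degree $\le 3$ and hence on the entire tensor product space $\mathbf F_d$, supplying a matching upper bound. The only possible stumbling block is keeping the normalization bookkeeping straight between $S_1$ and $S_1'$ when transporting the bound back to $\mathbf S_d$, but this is routine scalar multiplication; all genuine mathematical content is packaged in Theorem~\ref{thm:curse_tensor_product}.
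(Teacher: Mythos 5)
Your argument is correct and coincides with the paper's own proof: the same choice $e_1=1$, $e_2(x)=x$, the same rescaling $S_1'=2^{-1/2}S_1$ to meet the hypotheses of Theorem~\ref{thm:curse_tensor_product}, and the same product Gauss(--Hermite) rule for exactness at $n=2^d$. The only addition is your explicit scale-invariance bookkeeping for the normalized ratio, which the paper leaves implicit.
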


For the last statement it is enough to consider product Gauss 
formulas with $n=2^d$ function values that are exact for all 
polynomials from $\mathbf F_d$. 

\subsection{Integration for polynomials of degree 2 on $[-\frac{1}{2},\frac{1}{2}]$}

Let $F_1$ be the space of polynomials on $\R$ with degree at most 2,
defined on an interval of unit length. 
For convenience and symmetry we take the interval $[-1/2, 1/2]$. 
The univariate problem is given by 
$S_1(f) = \int_{-1/2}^{1/2} f(x) \,{\rm d} x$
and again we want to apply Theorem~\ref{thm:curse_tensor_product}.
For our construction we need 
$S_1(e^2_i) = \frac{1}{2} \sqrt{2}$
and $S_1(e_1e_2)=0$. 
This is achieved with the choice
$e_1= 2^{-1/4} $ and 
$e_2(x) = 72^{1/4} x$.
If we apply 
Theorem~\ref{thm:curse_tensor_product} 
then we obtain the following.

\begin{cor} 
	Take the RKHS $F_1$ on $I=[-1/2,1/2]$ which is generated by 
	the  orthonormal system 
	$\frac{1}{2} \sqrt{2}$, $\sqrt{72} x^2$ and $\sqrt{12} x$. 
	Then the integration problem $\mathbf S_d(f)=\int_{I^d} f(x)\, {\rm d} x$ on $\mathbf F_d$ satisfies
	$$
	e(n,\mathbf S_d)^2 \ge 1 - n \, 2^{-d}.
	$$
	In particular, we obtain the curse of dimensionality 
	and the fact that exactly $n=2^d$ function values 
	are needed for the exact integration. 
\end{cor}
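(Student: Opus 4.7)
The plan is to verify the hypotheses of Theorem~\ref{thm:curse_tensor_product} directly, with the natural choice $e_1(x)=2^{-1/4}$ and $e_2(x)=72^{1/4}x$ already indicated in the paragraph preceding the statement. A short computation yields $e_1^2 = \tfrac{1}{2}\sqrt{2}$, $e_2^2 = \sqrt{72}\,x^2$, and $\sqrt{2}\,e_1 e_2 = (2\cdot 72)^{1/4} x = 144^{1/4}x = \sqrt{12}\,x$. These are precisely the three functions that are declared to form an orthonormal basis of $F_1$, so the orthonormality hypothesis holds by the very definition of the RKHS structure on $F_1$, in accordance with the general remark made in Section~\ref{sec:homo}.

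Next I would check the two conditions on the functional $S_1(f)=\int_{-1/2}^{1/2} f(x)\,{\rm d}x$. By direct computation, $S_1(e_1^2) = \int_{-1/2}^{1/2} 2^{-1/2}\,{\rm d}x = \tfrac{1}{2}\sqrt{2}$, and since $\sqrt{72}=6\sqrt{2}$ and $\int_{-1/2}^{1/2} x^2\,{\rm d}x = 1/12$, one has $S_1(e_2^2) = 6\sqrt{2}/12 = \tfrac{1}{2}\sqrt{2}$. Finally, $S_1(e_1 e_2)=0$ since $e_1 e_2$ is an odd function integrated over a symmetric interval. All assumptions of Theorem~\ref{thm:curse_tensor_product} are thus in place, and the inequality $e(n,\mathbf{S}_d)^2 \ge 1 - n\,2^{-d}$ follows immediately; in particular the curse of dimensionality is established.

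For the final claim that exactly $n=2^d$ function values are required for exact integration on $\mathbf{F}_d$, the lower bound $n \ge 2^d$ is just the contrapositive of the theorem: if $n<2^d$ then $e(n,\mathbf{S}_d)^2 \ge 1 - n\,2^{-d} > 0$, so no quadrature rule using $n$ nodes is exact on the whole unit ball. The matching upper bound is obtained by the standard $d$-fold product of the two-point Gauss--Legendre rule, rescaled from $[-1,1]$ to $[-1/2,1/2]$: this product rule uses $2^d$ nodes and is exact on every polynomial whose degree in each variable is at most three, hence a fortiori on all of $\mathbf{F}_d$.

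I do not anticipate any real obstacle: the proof reduces to the elementary numerical verifications listed above plus a single invocation of Theorem~\ref{thm:curse_tensor_product}. The only point requiring a moment of care is the identification of the orthonormal system $e_1^2, e_2^2, \sqrt{2}\,e_1 e_2$ with the basis explicitly listed in the corollary statement, which is why making the factor $(2\cdot 72)^{1/4}=\sqrt{12}$ explicit seems worthwhile.
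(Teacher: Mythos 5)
Your proposal is correct and follows essentially the same route as the paper: the choice $e_1=2^{-1/4}$, $e_2(x)=72^{1/4}x$, the verification that $e_1^2,e_2^2,\sqrt{2}\,e_1e_2$ are exactly the listed orthonormal system with $S_1(e_i^2)=\tfrac{1}{2}\sqrt{2}$ and $S_1(e_1e_2)=0$, an application of Theorem~\ref{thm:curse_tensor_product}, and the product Gauss rule for the matching upper bound $n=2^d$. Your write-up is merely more explicit about the elementary computations, which the paper leaves implicit.
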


For the last statement it is again enough to consider product Gauss 
formulas with $n=2^d$ function values that are exact for all 
polynomials from~$\mathbf F_d$.

Observe that we are forced by our approach to take this norm
on $F_1$ and on the tensor product space. 
We admit that the norm on $F_1$ is not 
a standard Sobolev norm, actually it looks rather artificial.  
The norm in $F_1$ is a weighted $\ell_2$-norm of Taylor coefficients. 
For $d=1$ and $f(x) = ax^2 + bx +c$ we obtain the norm 
$$
\Vert f \Vert^2 = \frac{a^2}{72} + \frac{b^2}{12} + 2 c^2 . 
$$
It can also be written in the form 
\begin{align*}
\Vert f \Vert^2 &= 2f(0)^2 + \frac{1}{12} f'(0)^2 + \frac{1}{288} f''(0)^2\\
&= 2f(0)^2 + \frac{1}{12} f'(0)^2 + \frac{1}{288} \int_{-1/2}^{1/2} f''(x)^2 \, {\rm d} x
\end{align*}
and so looks at least a little like a Sobolev norm. 
By our approach, we are not free to choose the norm 
and obtain lower bounds only for very specific norms. 
For the given norm we obtain 
$$
\Gamma = \sup_{x\in I} K_1(x,x)^{1/2} = 8^{1/2} 
$$
and $\Gamma^d$ is the norm 
of the embedding of $\mathbf F_d$ %
into the 
space of
continuous functions 
with the sup norm. 
Hence functions in the unit ball of $\mathbf F_d$ %
may take large values 
if $d$ is large, but the integral is bounded by one. 
 
\subsection{Integration of functions with zero boundary conditions}

As another application of Theorem~\ref{thm:curse_tensor_product}, we consider the integration of
smooth functions with zero on the boundary.
For that sake, let $e_1(x)=2^{1/4}\sin(\pi x)$ and $e_2(x)=2^{1/4}\sin(2\pi x)$ for $x\in[0,1]$.
Further, let $F_1$ be a three-dimensional space of functions on $[0,1]$, such that the system
\begin{align}
\notag e_1^2(x)&=\sqrt{2}\sin^2(\pi x),\\
\label{eq:zero_bound} e_2^2(x)&=\sqrt{2}\sin^2(2\pi x),\\
\notag \sqrt{2}e_1(x)e_2(x)&=2\sin(\pi x)\sin(2\pi x)
\end{align}
forms an orthonormal basis of $F_1$.
The norm on $F_1$ is uniquely determined and it can be expressed
for example by
$$
\|f\|^2=\frac{1}{3}f(1/2)^2+\frac{1}{12\pi^2}\int_0^1 (1+4\sin^2(2\pi x)) f'(x)^2dx
$$
or
$$
\|f\|^2=\frac{1}{2}f(1/2)^2+\frac{1}{16\pi^2}f'(1/2)^2+\frac{1}{128\pi^4}[f''(1/4)+f''(3/4)]^2,
$$
which coincide on $F_1$.
We consider the integration problem on $F_1$ defined by
$$
S_1:F_1\to \R,\quad S_1(f)=\int_0^1 f(x)\,{\rm d}x
$$
and its tensor product version $\mathbf S_d$ on $\mathbf F_d$. 
We observe that $S_1(e_1^2)=S_1(e_2^2)=\sqrt{2}/2$ and $S_1(e_1e_2)=0$. 

\begin{cor} Let $F_1$ be a three-dimensional RKHS on $[0,1]$ such that the functions in \eqref{eq:zero_bound}
form its orthonormal basis. 
Then the integration problem $\mathbf S_d(f)=\int_{[0,1]^d}f(x)\, {\rm d}x$ satisfies
$$
e(n,\mathbf S_d)^2\ge 1-n\,2^{-d},
$$
i.e. it suffers from the curse of dimensionality.
\end{cor}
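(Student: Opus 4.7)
The plan is to derive the corollary as a direct application of Theorem~\ref{thm:curse_tensor_product}, with the two functions $e_1(x)=2^{1/4}\sin(\pi x)$ and $e_2(x)=2^{1/4}\sin(2\pi x)$ prescribed in the setup. The orthonormality of $e_1^2$, $e_2^2$ and $\sqrt{2}\,e_1 e_2$ in $F_1$ is built into the definition of the space, so the only points requiring verification are the moment conditions $S_1(e_1^2)=S_1(e_2^2)=\sqrt{2}/2$ and $S_1(e_1 e_2)=0$ that the authors already assert.

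To check these, I would evaluate the three relevant integrals by elementary trigonometry. The identity $\sin^2\theta=(1-\cos 2\theta)/2$ yields at once $\int_0^1\sqrt{2}\sin^2(\pi x)\,{\rm d}x=\int_0^1\sqrt{2}\sin^2(2\pi x)\,{\rm d}x=\sqrt{2}/2$, which covers the first two conditions. For the third, the product-to-sum formula gives $\sqrt{2}\sin(\pi x)\sin(2\pi x)=\tfrac{\sqrt{2}}{2}[\cos(\pi x)-\cos(3\pi x)]$, whose integral over $[0,1]$ vanishes because both $\sin(\pi x)$ and $\sin(3\pi x)$ are zero at the endpoints. All hypotheses of Theorem~\ref{thm:curse_tensor_product} are thereby met, and its conclusion is exactly $e(n,\mathbf S_d)^2\ge 1-n\,2^{-d}$, the inequality claimed in the corollary; the curse of dimensionality is then immediate, since demanding error at most $\varepsilon<1$ forces $n\ge(1-\varepsilon^2)\,2^d$.

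There is no substantive obstacle in the argument: the entire technical content already sits in Theorem~\ref{thm:curse_tensor_product} and ultimately in Proposition~\ref{prop:main2}. The only mild subtlety, flagged by the authors in the general remark preceding the examples, is that $F_1$ is specified only by declaring an orthonormal basis, so the norm on any larger ambient space is not uniquely determined; this does not affect the lower bound, and two explicit representations of the $F_1$-norm are recorded immediately after \eqref{eq:zero_bound}. Since $F_1$ is three-dimensional and its elements are continuous on $[0,1]$, the RKHS property is automatic, and no further work is needed.
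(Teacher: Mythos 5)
Your proposal is correct and follows the paper's own route: the corollary is exactly Theorem~\ref{thm:curse_tensor_product} applied with $e_1(x)=2^{1/4}\sin(\pi x)$, $e_2(x)=2^{1/4}\sin(2\pi x)$, and the paper likewise just verifies $S_1(e_1^2)=S_1(e_2^2)=\sqrt{2}/2$ and $S_1(e_1e_2)=0$ before invoking that theorem. Your elementary trigonometric checks of these moment conditions are accurate, so nothing is missing.
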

\begin{rem}
Let us observe that every $f\in F_1$ satisfies $f(0)=f(1)=f'(0)=f'(1)=0$. 
This means that the functions from $F_d$ 
and all their partial derivatives of order at most one in any of the variables
vanish on the boundary of the unit cube.
\end{rem}

\subsection{Hilbert spaces with decomposable kernels}

Another known method to prove lower bounds for tensor product func\-tion\-als works for so 
called decomposable kernels and slight modifications, see \cite[Chapter 11]{NW10}. 
There is some intersection where our method and the decomposable kernel method both work. 

Let $F_1$ be a RKHS on $D_1 \subset \R$ with reproducing kernel $K_1$.
The kernel $K_1$ is called decomposable if there exists $a^*\in \R$ such that the sets
$$
 D_{(1)} = \{ x \in D_1 \mid x\le a^*\} 
\quad \text{and} \quad
D_{(2)} = \{ x \in D_1 \mid x\ge a^*\} 
$$ 
are nonempty and $K_1(x,y)=0$ 
if $(x,y) \in  D_{(1)} \times D_{(2)}$ or $(x,y) \in  D_{(2)} \times D_{(1)}$.
If $K_1$ is decomposable, then $F_1$ is an orthonormal sum of $F_{(1)}$ and $F_{(2)}$ 
consisting of the functions in $F_1$ with support in $D_{(1)}$ and $D_{(2)}$, respectively.

Choosing now arbitrary suitably scaled functions 
$e_1$ with support in $D_{(1)}$ and $e_2$ with support in $D_{(2)}$
such that $e_1^2 \in F_{(1)}$ and $e_2^2 \in F_{(2)}$,
we automatically have that $e_1^2$ and $e_2^2$ are orthonormal in $F_1$ and $e_1 e_2 =0$. 
The proof of Theorem 3 is easily adapted to this case and gives the next corollary.

\begin{cor} \label{cor:_8}
Let $F_1$ be a RKHS on $D_1 \subset \R$ with decomposable reproducing kernel.
Let $e_1$ and $e_2$ be as above  
and let $h_1= \frac{1}{2} \sqrt{2} ( e_1^2 + e_2^2)$. 
Then the tensor product problem $\mathbf S_d=\langle \cdot, \mathbf h_d\rangle$ satisfies
\[
	e(n,\mathbf S_d)^2 \geq 1 - n \,  2^{-d} .
 \]
In particular, it suffers from the curse of dimensionality.  
\end{cor}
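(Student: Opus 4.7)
The plan is to mirror the proof of Theorem~\ref{thm:curse_tensor_product}, with the only real change being that we work in a two-dimensional subspace of $F_1$ rather than a three-dimensional one, because in the decomposable setting the would-be third basis vector $\sqrt{2}\, e_1 e_2$ vanishes identically.

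First I would reduce to the two-dimensional subspace $\widetilde F_1 := \operatorname{span}(e_1^2, e_2^2)\subset F_1$ and its $d$-fold Hilbert tensor product $\widetilde{\mathbf F}_d$. Since $\mathbf h_d$ lies in $\widetilde{\mathbf F}_d$, the functional $\mathbf S_d$ restricts to $\widetilde{\mathbf F}_d$ as $\langle\cdot,\mathbf h_d\rangle$ with the same representer, and shrinking the supremum to the unit ball of $\widetilde{\mathbf F}_d \subset \mathbf F_d$ only decreases the worst case error of any fixed quadrature rule, so any lower bound on the radius of information of $\mathbf S_d$ over $\widetilde{\mathbf F}_d$ is automatically a lower bound over $\mathbf F_d$. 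On $\widetilde F_1$, the pair $(e_1^2, e_2^2)$ is orthonormal by hypothesis, the reproducing kernel is
\[
\widetilde K_1(x,y)=e_1(x)^2 e_1(y)^2+e_2(x)^2 e_2(y)^2,
\]
and $h_1=\tfrac{\sqrt 2}{2}(e_1^2+e_2^2)$ has unit norm, so $\|\mathbf h_d\|=1$.

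The key identity is $\widetilde K_1(x,y)=M_1(x,y)^2$, where $M_1(x,y)=e_1(x)e_1(y)+e_2(x)e_2(y)$; the cross-term $2\,e_1(x)e_2(x)\,e_1(y)e_2(y)$ that would appear on expanding the square vanishes pointwise because $e_1$ is supported in $D_{(1)}$ and $e_2$ in $D_{(2)}$ (and decomposability forces $K_1(a^*,a^*)=0$, hence $f(a^*)=0$ for every $f\in F_1$, so the shared point $a^*$ causes no trouble either). One also has $h_1(x)=\tfrac{\sqrt 2}{2}\,M_1(x,x)$. From here the argument is verbatim that of Theorem~\ref{thm:curse_tensor_product}: taking $d$-fold tensor products gives $\widetilde{\mathbf K}_d(x,y)=\mathbf M_d(x,y)^2$ and $\mathbf h_d(x)=2^{-d/2}\,\mathbf M_d(x,x)$, where $\mathbf M_d$ is the tensor product of $M_1$; since $(\mathbf M_d(x_j,x_k))_{j,k\le n}$ is a Gram matrix and hence positive semi-definite, Proposition~\ref{prop:main2} applied to it yields
\[
\bigl(\widetilde{\mathbf K}_d(x_j,x_k)-n^{-1}\,2^d\,\mathbf h_d(x_j)\mathbf h_d(x_k)\bigr)_{j,k\le n}\succeq 0,
\]
and Proposition~\ref{prop:error_vs_definiteness} with $\alpha=n^{-1}2^d$ delivers $e(n,\mathbf S_d)^2\ge 1-n\,2^{-d}$.

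The step that needs the most care is only bookkeeping, namely verifying the subspace reduction and the pointwise identity $e_1 e_2\equiv 0$; once those are in place, no idea beyond the proof of Theorem~\ref{thm:curse_tensor_product} is required. The role played there by the orthonormality of $\sqrt 2\, e_1 e_2$ with $e_1^2, e_2^2$ (arranged by hand in a three-dimensional model space) is replaced here, for free, by the disjoint-support structure inherited from the decomposability of $K_1$.
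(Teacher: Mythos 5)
Your proposal is correct and follows exactly the route the paper intends: the paper's proof of this corollary is simply the remark that "the proof of Theorem~\ref{thm:curse_tensor_product} is easily adapted," and your adaptation (restriction to the two-dimensional span of $e_1^2,e_2^2$, the pointwise identity $e_1e_2\equiv 0$ from the disjoint supports with $K_1(a^*,a^*)=0$, the factorization $\widetilde K_1=M_1^2$, then Proposition~\ref{prop:main2} and Proposition~\ref{prop:error_vs_definiteness}) is precisely that adaptation, carried out correctly. No gaps.
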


One particular example, where this corollary is applicable, is the centered $L_2$-discrepancy. 
Here $F_1$ consists of absolutely continuous functions $f$ on $[0,1]$ with $f(1/2)=0$ and $f' \in L_2[0,1]$.
The norm of $f$ in $F_1$ is the $L_2$-norm of $f'$. The kernel of $F_1$ is given by $K_1(x,y)=\big(|x-1/2| + |y-1/2| - |x-y|\big)/2$
and is decomposable with respect to $D_{(1)}=[0,1/2]$ and $D_{(2)}=[1/2,1]$.
The normalized representer of the integration problem is $h_1(x)=\big(|x-1/2| - |x-1/2|^2\big)/2$. Then $e_1^2$ is the normalized
restriction of $h_1$ to the interval $[0,1/2]$, similarly, $e_2^2$ is the normalized restriction of $h_1$ to the interval $[1/2,1]$.
Since $h_1$ is nonnegative, such functions $e_1$ and $e_2$ exist.

Corollary \ref{cor:_8} is a special case (for $\alpha=1/2$) of \cite[ Theorem 11.8]{NW10}. 
As such, it will not give any new results. Nevertheless, it seems appropriate to note the connection. 
It would be interesting to know if the full strength of \cite[ Theorem 11.8]{NW10} can be 
obtained via this approach or the variants described in the next section. 

\subsection{Exact Integration}

In all the examples from above, we obtained that
$$
e(2^d -1, \mathbf S_d) > 0,
$$
so that it is not possible to compute the integral exactly with less than $2^d$ function values.
One may ask whether this is the case for all nontrivial tensor product problems.
Here a problem is called trivial if $e(1, S_1)=0$. 
Then we have also $e(1,\mathbf  S_d)=0$ for all $d$. 
In general, the answer is ``no", examples with 
$e(d,\mathbf S_d)>0$   but $e(d+1,\mathbf  S_d) = 0$ can be found in
\cite[Section 11.3]{NW10} 
which is based on~\cite{NSW97}. 
However, we obtain the following criterion
under which the answer to the above question is ``yes".

\begin{cor}
If there are functions $e_1$ and $e_2$ 
such that $e_1^2, e_2^2, e_1e_2 \in F_1$ are linearly independent with 
$S_1(e^2_i) \not= 0$ and $S_1(e_1 e_2)=0$, then 
$$
e(2^d-1,\mathbf  S_d ) > 0.
$$
\end{cor}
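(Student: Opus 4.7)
The plan is to exhibit a family of tensor product test functions in $\mathbf F_d$ and combine a simple rank count with an Eckart--Young type inequality; the orthonormality hypothesis of Theorem~\ref{thm:curse_tensor_product} is unavailable here, so one cannot simply quote that theorem.

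First I would introduce, for each pair $(\sigma,\tau)\in\{1,2\}^d\times\{1,2\}^d$, the test function
\[
f_{\sigma,\tau}=\bigotimes_{i=1}^d \bigl(e_{\sigma_i}e_{\tau_i}\bigr),
\]
whose factors are each one of $e_1^2,e_2^2,e_1e_2\in F_1$, so that $f_{\sigma,\tau}\in\mathbf F_d$. Writing $\lambda_j=S_1(e_j^2)\neq 0$ and using $S_1(e_1e_2)=0$, the tensor product structure of $\mathbf S_d$ yields $\mathbf S_d(f_{\sigma,\tau})=\Lambda_\sigma\,\delta_{\sigma,\tau}$ with $\Lambda_\sigma:=\prod_i\lambda_{\sigma_i}\neq 0$; hence the $2^d\times 2^d$ diagonal matrix $\Lambda:=\diag(\Lambda_\sigma)$ has full rank $2^d$.

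Next, for any quadrature $Q_{c,\X_n}$ with $n=2^d-1$, I would observe that
\[
Q_{c,\X_n}(f_{\sigma,\tau})=(GCG^T)_{\sigma,\tau},\qquad G_{\sigma,k}=\prod_{i=1}^d e_{\sigma_i}(x_k^i),\quad C=\diag(c_1,\dots,c_n).
\]
Since $G$ has only $n=2^d-1$ columns, $\mathrm{rank}(GCG^T)\le 2^d-1$, and the Eckart--Young theorem applied to the diagonal, full-rank matrix $\Lambda$ gives
\[
\|\Lambda-GCG^T\|_F\ge \min_\sigma|\Lambda_\sigma|>0
\]
uniformly in $c$ and $\X_n$. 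On the other hand, Cauchy--Schwarz in the RKHS yields $|\mathbf S_d(f)-Q_{c,\X_n}(f)|\le e(Q_{c,\X_n},\mathbf S_d)\cdot\|f\|$ for every $f\in\mathbf F_d$; applied to the $f_{\sigma,\tau}$, squared and summed, it gives
\[
\|\Lambda-GCG^T\|_F\le e(Q_{c,\X_n},\mathbf S_d)\,\Bigl(\sum_{\sigma,\tau}\|f_{\sigma,\tau}\|^2\Bigr)^{1/2}.
\]
Because $\|f_{\sigma,\tau}\|^2=\prod_i\|e_{\sigma_i}e_{\tau_i}\|^2$ depends only on the data $(F_1,e_1,e_2)$, combining the two displays produces a uniform positive lower bound on $e(Q_{c,\X_n},\mathbf S_d)$; taking the infimum over $c$ and $\X_n$ then concludes $e(2^d-1,\mathbf S_d)>0$.

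The main obstacle I anticipate is exactly this final conversion. The rank count alone only rules out quadratures that integrate $\mathbf S_d$ exactly, whereas the corollary demands that the infimum of the error (over all $\X_n$ and all weights $c$) be strictly positive; this forces a quantitative statement like Eckart--Young applied to the full-rank $\Lambda$, coupled with the a priori norm bound on the test tensors. The hypothesis $S_1(e_1e_2)=0$ is used only to make $\Lambda$ diagonal, $S_1(e_i^2)\neq 0$ to keep $\min_\sigma|\Lambda_\sigma|>0$, and linear independence of $e_1^2,e_2^2,e_1e_2$ enters implicitly, ensuring the system of test functions is genuinely nontrivial.
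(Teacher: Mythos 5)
Your proof is correct, and it takes a genuinely different route from the paper's. The paper deduces the corollary from Theorem~\ref{thm:curse_tensor_product}: one rescales $\tilde e_i=c_ie_i$ so that $S_1(\tilde e_i^2)=\sqrt2/2$, notes that positivity of $e(2^d-1,\mathbf S_d)$ (i.e.\ the impossibility of exact integration with $2^d-1$ nodes) does not depend on the choice of scalar product, and applies the theorem to the three-dimensional space in which $\tilde e_1^2,\tilde e_2^2,\sqrt2\,\tilde e_1\tilde e_2$ are declared orthonormal, giving $e(2^d-1,\mathbf S_d)^2\ge 2^{-d}$ in that auxiliary norm; the linear independence hypothesis is what makes this three-dimensional space available. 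You instead stay in the original norm and avoid the Schur-product machinery entirely: the value matrix $(Q_{c,\X_n}(f_{\sigma,\tau}))_{\sigma,\tau}=GCG^T$ has rank at most $n=2^d-1$, the target matrix $(\mathbf S_d(f_{\sigma,\tau}))_{\sigma,\tau}=\Lambda$ is diagonal of full rank $2^d$, Eckart--Young gives $\|\Lambda-GCG^T\|_F\ge\min_\sigma|\Lambda_\sigma|>0$ uniformly in nodes and weights, and the elementary bound $|\mathbf S_d(f)-Q_{c,\X_n}(f)|\le e(Q_{c,\X_n},\mathbf S_d)\,\|f\|$ with the cross-norm identity $\|f_{\sigma,\tau}\|=\prod_i\|e_{\sigma_i}e_{\tau_i}\|$ converts this into an explicit positive lower bound; you correctly identified and closed the gap between ``no exact rule exists'' and ``the infimum of the errors is positive'', which is exactly the point the paper handles via the norm-independence remark. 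What each approach buys: the paper's argument is a two-line corollary of its main theorem and inherits the clean normalized bound $1-n2^{-d}$ (with respect to the modified norm), whereas yours is self-contained linear algebra, works directly with the given norm, yields a quantitative (if less elegant) constant, and in fact never uses the linear independence of $e_1^2,e_2^2,e_1e_2$---only their membership in $F_1$ together with $S_1(e_i^2)\neq0$ and $S_1(e_1e_2)=0$---so it even establishes the corollary under weaker hypotheses.
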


This follows from Theorem~\ref{thm:curse_tensor_product}
since the statement on exact integration does not depend on the
norm (resp.\ scalar product) of $F_1$. We may simply
apply the theorem
to the 3-dimensional space which is defined by the orthonormal basis
$\tilde e_1^2, \tilde e_2^2$ and $\sqrt 2 \tilde e_1\tilde e_2$, 
where $\tilde e_i := c_i e_i$
with $c_i \in \R$ such that $S_1(\tilde e_i^2)=\sqrt 2/2$.

\section{Non-homogeneous tensor products}
\label{sec:non-homo}

We now turn to tensor products whose factors $F_i$ and $h_i$ may be different for each $i\le d$.
We start with the following generalization of Theorem \ref{thm:curse_tensor_product}, which involves an additional parameter $\alpha_i$.

\begin{thm}\label{thm:abstract-trigonometric-weighted}
For all $i\le d$, let $F_i$ be a RKHS and let $S_i$ be a bounded linear functional on $F_i$ 
with unit norm and nonnegative representer~$h_i$. 
Assume that there are functions $f_i$ and $g_i$ in $F_i$ and a number $\alpha_i\in (0,1]$ such that
$(h_i,f_i,g_i)$ is orthonormal in $F_i$ and $\alpha_i h_i=\sqrt{f_i^2+g_i^2}$.
Then the tensor product problem $\mathbf S_d=S_1\otimes\hdots\otimes S_d$ %
satisfies for all $n\in\N$ that
\[
	e(n,\mathbf S_d)^2 \ge 1-n\prod_{i=1}^d(1+\alpha_i^2)^{-1}.
\]
\end{thm}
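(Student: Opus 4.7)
The plan is to use Proposition~\ref{prop:error_vs_definiteness}(ii) to reduce the claim to a matrix positive-semi-definiteness statement, and then prove that statement by introducing a complex square root of the essential rank-three part of the product kernel and applying a complex Hermitian analogue of Proposition~\ref{prop:main2}. Writing $C := \prod_{i=1}^d (1+\alpha_i^2)$ and using $\|\mathbf h_d\| = 1$, Proposition~\ref{prop:error_vs_definiteness}(ii) makes the target bound equivalent to the statement that, for every $x_1,\dots,x_n\in\mathbf D_d$, the matrix
\[
\bigl(\mathbf K_d(x_j,x_k) - \tfrac{C}{n}\mathbf h_d(x_j)\mathbf h_d(x_k)\bigr)_{j,k=1}^n
\]
is positive semi-definite, which I will verify in three steps.

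First, I would reduce to a rank-three sub-kernel. Since $(h_i,f_i,g_i)$ is orthonormal in $F_i$, the rank-three kernel $K_i^{(3)}(x,y) := h_i(x)h_i(y) + f_i(x)f_i(y) + g_i(x)g_i(y)$ is a sub-kernel of $K_i$, so $K_i - K_i^{(3)}$ is again a reproducing kernel. Hence the Gram matrix of $K_i$ dominates that of $K_i^{(3)}$ in the PSD order, and iterating Schur-product monotonicity one factor at a time yields $\bigl(\mathbf K_d(x_j,x_k)\bigr)_{j,k}\succeq\bigl(\mathbf K_d^{(3)}(x_j,x_k)\bigr)_{j,k}$, where $\mathbf K_d^{(3)} := \prod_{i=1}^d K_i^{(3)}$. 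It therefore suffices to prove the target inequality with $\mathbf K_d$ replaced by $\mathbf K_d^{(3)}$.

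Next, I would construct a complex square root of $K_i^{(3)}$. The hypothesis $f_i^2+g_i^2 = \alpha_i^2 h_i^2$, combined with $\alpha_i\le 1$ and $h_i\ge 0$, gives by Cauchy-Schwarz the pointwise lower bound $K_i^{(3)}(x,y)\ge(1-\alpha_i^2)h_i(x)h_i(y)\ge 0$. This nonnegativity makes possible a rank-two complex PSD kernel $P_i(x,y) := u_i(x)\overline{u_i(y)} + v_i(x)\overline{v_i(y)}$ satisfying $|P_i|^2 = K_i^{(3)}$ and $P_i(x,x) = \sqrt{1+\alpha_i^2}\,h_i(x)$; explicitly, with $\lambda_\pm^{(i)} := \tfrac{1}{2}\bigl(\sqrt{1+\alpha_i^2}\pm\sqrt{1-\alpha_i^2}\bigr)$ and $\vartheta_i(x) := \arg(f_i(x)+ig_i(x))$, take $u_i(x) := \sqrt{\lambda_+^{(i)} h_i(x)}\,e^{i\vartheta_i(x)}$ and $v_i(x) := \sqrt{\lambda_-^{(i)} h_i(x)}$, and use the identities $\lambda_+^{(i)}\lambda_-^{(i)} = \alpha_i^2/2$ and $(\lambda_+^{(i)})^2+(\lambda_-^{(i)})^2 = 1$ to verify the two claimed properties.

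Finally, I would tensor and apply the complex Schur-type inequality. Set $\mathbf P(x,y) := \prod_{i=1}^d P_i(x^i,y^i)$; its Gram matrix $P := (\mathbf P(x_j,x_k))_{j,k}$ is complex Hermitian PSD (as a Hadamard product of such) with $|P_{jk}|^2 = \mathbf K_d^{(3)}(x_j,x_k)$ and $P_{jj} = \sqrt{C}\,\mathbf h_d(x_j)$. I would then invoke the complex Hermitian analogue of Proposition~\ref{prop:main2}: every complex Hermitian PSD matrix $P\in\C^{n\times n}$ satisfies $P\circ\overline P\succeq\tfrac{1}{n}\operatorname{diag}(P)\operatorname{diag}(P)^T$. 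This follows from the same trace-Cauchy-Schwarz argument as in the real case: writing $P = WW^*$ with $W\in\C^{n\times r}$ and $r\le n$, one has $c^T(P\circ\overline P)c = \|M\|_F^2$ for $M := \sum_j c_j w_jw_j^*$ (with $w_j$ the rows of $W$), and $(\operatorname{tr} M)^2\le r\|M\|_F^2\le n\|M\|_F^2$ yields $\|M\|_F^2\ge(c^T\operatorname{diag}(P))^2/n$. Applying this to $P$ and combining with the first step completes the proof. I expect the most delicate point to be the explicit factorization $K_i^{(3)} = |P_i|^2$: the hypothesis $\alpha_i\le 1$ is exactly what keeps the discriminant $1-\alpha_i^2$ nonnegative, so that the moduli $\lambda_\pm^{(i)}$ are real.
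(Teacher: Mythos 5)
Your proof is correct, but it takes a genuinely different route from the paper. The paper stays entirely in the real setting: after the same reduction to the three\--dimensional span of $(h_i,f_i,g_i)$, it writes $K_i(x,y)=M_i^2(x,y)+(1-\alpha_i^2)h_i(x)h_i(y)$ with a real rank\--two kernel $M_i=a_ia_i+b_ib_i$ built from $a_i=2^{-1/4}\sqrt{\alpha_ih_i+f_i}$, $b_i=2^{-1/4}\,{\rm sgn}(g_i)\sqrt{\alpha_ih_i-f_i}$, expands $\mathbf K_d=\sum_{A\subset\{1,\dots,d\}}\mathbf K_d^A$ into $2^d$ terms, applies the real Proposition~\ref{prop:main2} to each factor $\prod_{i\in A}M_i^2$, and resums to get the factor $\prod_i\bigl(2\alpha_i^2+(1-\alpha_i^2)\bigr)=\prod_i(1+\alpha_i^2)$. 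You instead build a complex rank\--two Hermitian kernel $P_i$ with $|P_i|^2=K_i^{(3)}$ \emph{exactly} (no remainder term) and diagonal $\sqrt{1+\alpha_i^2}\,h_i$, so the whole bound drops out of a single application of a complex Hermitian analogue of Proposition~\ref{prop:main2}, which you prove by the same trace Cauchy--Schwarz argument; I checked the identities $\lambda_+^{(i)}\lambda_-^{(i)}=\alpha_i^2/2$, $(\lambda_+^{(i)})^2+(\lambda_-^{(i)})^2=1$, the relation $f_i(x)f_i(y)+g_i(x)g_i(y)=\alpha_i^2h_i(x)h_i(y)\cos(\vartheta_i(x)-\vartheta_i(y))$, and the Hermitian lemma itself, and they all hold. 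What your route buys is the elimination of the binomial expansion over subsets $A$ and a conceptually cleaner ``complex square root'' of the full rank\--three kernel, at the cost of introducing a lemma not stated in the paper (the complex version of Proposition~\ref{prop:main2}), which you correctly supply; the paper's route buys staying within its already established real tools. Two cosmetic remarks: $\vartheta_i(x)=\arg(f_i(x)+ig_i(x))$ is undefined where $f_i(x)=g_i(x)=0$, but there $h_i(x)=0$ so any choice works and you should say so; and the pointwise nonnegativity $K_i^{(3)}(x,y)\ge(1-\alpha_i^2)h_i(x)h_i(y)$ is only a consistency check, the construction really uses $h_i\ge0$ and $\alpha_i\le1$, exactly as you note at the end.
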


\begin{proof}
 Let $D_i$ be the domain of the space $F_i$.
 Without loss of generality, we may assume that $(h_i,f_i,g_i)$ is an
 orthonormal basis of $F_i$. In this case, the reproducing kernel of $F_i$ is
 given by
 \[
  K_i: D_i\times D_i \to \R, \quad K_i(x,y)=h_i(x)h_i(y)+f_i(x)f_i(y)+g_i(x)g_i(y).
 \]
 Let us consider the functions
 \[
  a_i=2^{-1/4} \sqrt{\alpha_i h_i + f_i},
  \qquad
  b_i=2^{-1/4} {\rm sgn}(g_i) \sqrt{\alpha_i h_i - f_i}
 \]
 on the domain $D_i$ of $F_i$.
 These functions are well defined since $\alpha_i h_i\ge \vert f_i\vert$
 and linearly independent since $h$ and $f$ are linearly independent.
 The function
 \[
  M_i: D_i\times D_i \to \R, \quad M_i(x,y)=a_i(x)a_i(y)+b_i(x)b_i(y)
 \]
 is a reproducing kernel on $D_i$ and its diagonal is $\sqrt{2} \alpha_i h_i$. 
 A simple computation shows for all $x,y\in D_i$ that
 \[
  K_i(x,y)=M_i^2(x,y)+(1-\alpha_i^2) h_i(x)h_i(y).
 \]
 Let now $\mathbf K_d$ be the reproducing kernel
 of the product space $\mathbf F_d=F_1\otimes\hdots\otimes F_d$ 
 with domain $\mathbf D_d = D_1 \times\hdots\times D_d$
 and let $x_1,\hdots,x_n\in \mathbf D_d$.
 We have
 \[
  \mathbf K_d(x_j,x_k) =\prod_{i=1}^d K_i(x_{j,i},x_{k,i}) =\sum_{A\subset \{1,\dots,d\}} \mathbf K_d^A(x_j,x_k),
 \]
 where
 \[
  \mathbf K_d^A(x_j,x_k) = \prod_{i\in A} M_i^2(x_{j,i},x_{k,i}) \, \prod_{i\not\in A}(1-\alpha_i^2)h_i(x_{j,i})h_i(x_{k,i}).
 \]
The application of Proposition \ref{prop:main2} yields
$$
\Biggl(\prod_{i\in A} M_i^2(x_{j,i},x_{k,i})\Biggr)_{j,k=1}^n\succeq
\frac{1}{n}\Biggl(\prod_{i\in A} 2\alpha_i^2h_i(x_{j,i})h_i(x_{k,i})\Biggr)_{j,k=1}^n
$$
and hence
\[
\Bigl(\mathbf K_d^A(x_j,x_k)\Bigr)_{j,k=1}^n
\succeq 
\frac{1}{n}\prod_{i\in A} 2\alpha_i^2 \prod_{i\not\in A}(1-\alpha_i^2)\,
\Bigl(\mathbf{h}_d(x_j)\mathbf{h}_d(x_k)\Bigr)_{j,k=1}^n,
\]
where $\mathbf h_d=h_1\otimes\hdots\otimes h_d$ is the representer of 
the product functional~$\mathbf S_d$.
Summing over all subsets $A$, we arrive at
\begin{align*}
\Bigl(\mathbf K_d(x_j,x_k)\Bigr)_{j,k=1}^n&=\sum_{A\subset\{1,\dots,d\}}\Bigl(\mathbf K_d^A(x_j,x_k)\Bigr)_{j,k=1}^n\\
&\succeq\frac{1}{n}\sum_{A\subset\{1,\dots,d\}}\prod_{i\in A} 2\alpha_i^2 \prod_{i\not\in A}(1-\alpha_i^2)\,
\Bigl(\mathbf{h}_d(x_j)\mathbf{h}_d(x_k)\Bigr)_{j,k=1}^n\\
&=\frac{1}{n}\prod_{i=1}^d (1+\alpha_i^2)\,
\Bigl(\mathbf{h}_d(x_j)\mathbf{h}_d(x_k)\Bigr)_{j,k=1}^n.
\end{align*}
Now the statement follows by Proposition \ref{prop:error_vs_definiteness}.
\end{proof}

As applications of this result, 
we consider spaces of trigonometric polynomials,
Korobov spaces with increasing smoothness
and Korobov spaces with product weights.

\subsection{Trigonometric polynomials}
\label{sec:increasing-smoothness}

The most prominent special case of Theorem \ref{thm:abstract-trigonometric-weighted}
is the case of trigonometric polynomials of order at most one, i.e.,
\begin{equation}\label{eq:hfg}
h_i(x)=1,\quad f_i(x)=\alpha_i\cos(2\pi x),\quad g_i(x)=\alpha_i\sin(2\pi x),\quad x\in[0,1],
\end{equation}
which leads to the following result.
\begin{cor}\label{thm:alpha}
For all $1\le i \le d$, let  $\alpha_i\in(0,1]$ and let $F_i$ be a RKHS on $[0,1]$ such that $(h_i,f_i,g_i)$ defined in \eqref{eq:hfg}
are orthonormal in $F_i$. Then the integration problem $\mathbf S_d(f)=\int_{[0,1]^d}f(x){\rm d}x$ satisfies on ${\mathbf F_d}=F_1\otimes\dots\otimes F_d$
\[
e(n,\mathbf S_d)^2\ge 1-n\prod_{i=1}^d(1+\alpha_i^2)^{-1}.
\]
\end{cor}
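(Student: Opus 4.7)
The plan is to apply Theorem~\ref{thm:abstract-trigonometric-weighted} directly to the specific choice \eqref{eq:hfg}. Most of the work has already been done in the abstract theorem, so the task reduces to verifying its hypotheses, which in this case are almost immediate.

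First I would check that the integration functional $S_i(f)=\int_0^1 f(x)\,{\rm d}x$ has $h_i\equiv 1$ as its Riesz representer in $F_i$ and that this representer has unit norm. Since $(h_i,f_i,g_i)=(1,\alpha_i\cos(2\pi x),\alpha_i\sin(2\pi x))$ is by assumption orthonormal in $F_i$, we already have $\|h_i\|_{F_i}=1$, and for any $f$ in the span of these three functions the integral $\int_0^1 f(x)\,{\rm d}x$ picks out precisely the coefficient of $h_i$ in the orthonormal expansion, i.e.\ $\langle f,h_i\rangle_{F_i}$. As in the proof of Theorem~\ref{thm:curse_tensor_product}, one may without loss of generality restrict to this three-dimensional subspace; a lower bound on the smaller space transfers to any larger RKHS sharing the same orthonormal system. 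Observe also that $h_i$ is nonnegative.

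Second, I would verify the algebraic identity $\alpha_i h_i=\sqrt{f_i^2+g_i^2}$, which is just the Pythagorean identity:
\[
\sqrt{f_i(x)^2+g_i(x)^2}=\sqrt{\alpha_i^2\cos^2(2\pi x)+\alpha_i^2\sin^2(2\pi x)}=\alpha_i=\alpha_i h_i(x),
\]
and $\alpha_i\in(0,1]$ matches the hypothesis of the theorem exactly.

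With both hypotheses verified, Theorem~\ref{thm:abstract-trigonometric-weighted} applies directly and yields the claimed lower bound. There is no genuine obstacle here, since the substantive content of the argument already lives in the abstract theorem; the only subtlety worth flagging is that the identification of integration with the inner-product pairing $\langle\cdot,h_i\rangle_{F_i}$ is guaranteed only on the three-dimensional span of $(h_i,f_i,g_i)$, which is why the reduction to the finite-dimensional subspace is invoked.
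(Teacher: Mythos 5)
Your proposal is correct and matches the paper's route: the corollary is presented there as the immediate special case of Theorem~\ref{thm:abstract-trigonometric-weighted} with $h_i=1$, $f_i=\alpha_i\cos(2\pi x)$, $g_i=\alpha_i\sin(2\pi x)$, where orthonormality gives the unit-norm nonnegative representer and the Pythagorean identity gives $\alpha_i h_i=\sqrt{f_i^2+g_i^2}$. Your remark about reducing to the three-dimensional span is exactly the same ``without loss of generality'' step the paper uses in the theorem's proof and in its applications.
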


Corollary~\ref{thm:alpha} can be used to prove lower bounds for numerical integration on spaces with varying smoothness.
Such classes were studied in \cite{KSU20,PW10} 
for the approximation problem and upper bounds for numerical integration were provided
in \cite[Section 10.7.4]{NW10}. We first recall the notation. %

For a non-decreasing sequence of positive integers $r=(r_i)_{i=1}^{\infty}$
we consider the spaces $H_{1,r_i}$ of 1-periodic real valued functions $f$ defined on $[0,1]$ such that $f^{(r_i-1)}$
is absolutely continuous and $f^{(r_i)}$ belongs to $L_2([0,1])$. The norm on $H_{1,r_i}$ is given by
\[
\|f\|^2_{H_{1,r_i}}=\Bigl|\int_0^1 f(x){\rm d}x\Bigr|^2+\int_0^1|f^{(r_i)}(x)|^2{\rm d}x.
\]
The Korobov space of varying smoothness is then defined by
\[
{\mathbf F}_d=H_{1,r_1}\otimes\dots\otimes H_{1,r_d}.
\]

If we set $\alpha_i=\sqrt{2}\cdot(2\pi)^{-r_i}$, then $(h_i,f_i,g_i)$ from \eqref{eq:hfg}
form an orthonormal system in $H_{1,r_i}$ and we denote their span in $H_{1,r_i}$ by $\widetilde H_{1,r_i}$.
We will prove lower bounds for ${\mathbf F}_d$ by actually considering only the $3^d$-dimensional space
\[
\widetilde {\mathbf F}_d=\widetilde H_{1,r_1}\otimes\dots\otimes\widetilde H_{1,r_d}.
\]
We consider the integration problem
\[
\mathbf S_d(f)=\int_{[0,1]^d}f(x){\rm d}x,\quad f\in \widetilde {\mathbf F}_d.
\]
We call the problem \emph{polynomially tractable} if there are positive constants $C,p,q>0$ such that
\[
 n(\varepsilon,\mathbf S_d) \le C d^p \varepsilon^{-q}
\]
for all $\varepsilon>0$ and $d\in\N$.
We call it \emph{strongly polynomially tractable} if we can choose $p=0$ in this estimate.
Moreover, the problem is called \emph{weakly tractable} if
\[
 \lim_{\varepsilon^{-1}+d \to \infty} \frac{\ln n(\varepsilon,\mathbf S_d)}{\varepsilon^{-1}+d} = 0.
\]
It was observed in \cite[Section 10.7.4]{NW10} (see also Corollary 10.5 there), that 
\begin{itemize}
\item if $L^{{\rm sup}}:=\limsup_{i\to\infty}\frac{\ln (i)}{r_i}<2\ln (2\pi)$, then integration on $\mathbf F_d$ is strongly polynomially tractable;
\item if $L^{{\rm sup}}<+\infty$, then integration on $\mathbf F_d$ is weakly tractable.
\end{itemize}

We complement this by showing lower bounds for numerical integration on $\widetilde {\mathbf F}_d$ (which of course also apply to the larger space ${\mathbf F}_d$).
By Corollary~\ref{thm:alpha}, we obtain the estimate
\begin{equation}\label{eq:hfg2}
e(n,\mathbf S_d)^2\ge 1-n\prod_{i=1}^d(1+\alpha_i^2)^{-1}=1-n\prod_{i=1}^d (1+2\cdot(2\pi)^{-2r_i})^{-1}.
\end{equation}

\begin{cor} For $d\ge 2$, let ${\mathbf F}_d$ be the Korobov space of varying smoothness on $[0,1]^d$ given by the sequence $r=(r_i)_{i=1}^\infty$
and let $\widetilde {\mathbf F}_d$ be its $3^d$-dimensional subspace of trigonometric polynomials of order at most one in each variable. 
\begin{enumerate}
\item[(i)] If $r=(r_i)_{i=1}^\infty$ is bounded, then numerical integration on $\widetilde {\mathbf F}_d$ (and hence also on ${\mathbf F}_d$) suffers from the curse of dimension.
\item[(ii)] If $L^{{\rm inf}}:=\liminf_{i\to\infty}\frac{\ln i}{r_i}=\infty$, then numerical integration on $\widetilde {\mathbf F}_d$
(and hence also on ${\mathbf F}_d$) satisfies for any $\varepsilon,\beta>0$ that
\[
 n(\varepsilon,\mathbf S_d) \,\ge\, c_{\varepsilon,\beta} \, \exp\left(d^{1-\beta}\right).
\]%
\item[(iii)] If $L^{{\rm inf}}>2\ln(2\pi)$, then numerical integration on $\widetilde {\mathbf F}_d$ (and hence also on ${\mathbf F}_d$)
is not polynomially tractable.
\end{enumerate}
\end{cor}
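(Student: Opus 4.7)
The three parts all rest on the single lower bound \eqref{eq:hfg2}, which is equivalent to
\[
  n(\varepsilon,\mathbf{S}_d) \,\ge\, (1-\varepsilon^2)\, P_d, \qquad
  P_d \,:=\, \prod_{i=1}^d \bigl(1 + 2(2\pi)^{-2r_i}\bigr),
\]
whenever $e(n,\mathbf{S}_d)\le\varepsilon$. The whole plan therefore reduces to lower-bounding $P_d$ under each of the three hypotheses on $r=(r_i)$.

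Part (i) is immediate: if $r_i\le R$ for all $i$, then every factor of $P_d$ is at least $C:=1+2(2\pi)^{-2R}>1$, so $n(\varepsilon,\mathbf{S}_d)\ge (1-\varepsilon^2)C^d$, which is the curse.

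For (ii) and (iii) the plan is to turn the $\liminf$ hypothesis into a polynomial-decay bound on the individual factors. If $\liminf_{i\to\infty}\ln(i)/r_i\ge M$, then there exists $i_0=i_0(M)$ with $r_i\le \ln(i)/M$ for all $i\ge i_0$, hence
\[
  (2\pi)^{-2r_i} \,\ge\, i^{-\eta}, \qquad \eta := 2\ln(2\pi)/M,\quad i\ge i_0.
\]
Combining this with the elementary inequality $\ln(1+x)\ge x/3$ on $[0,2]$ (applicable since $2i^{-\eta}\le 2$) gives
\[
  \ln P_d \,\ge\, \sum_{i=i_0}^d \ln\bigl(1+2i^{-\eta}\bigr)\,\ge\, \tfrac{2}{3}\sum_{i=i_0}^d i^{-\eta},
\]
and for $\eta<1$ the right-hand side grows like $d^{1-\eta}$ up to positive constants. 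For (ii), the assumption $L^{{\rm inf}}=\infty$ lets us take $M$ arbitrarily large; given any $\beta\in(0,1)$ we pick $M = 2\ln(2\pi)/\beta$, so $\eta=\beta$, and obtain $P_d\ge \exp(c\,d^{1-\beta})$ for $d$ large, which yields $n(\varepsilon,\mathbf{S}_d)\ge c_{\varepsilon,\beta}\exp(d^{1-\beta})$ after absorbing the finitely many small $d$ and the constant $c$ into $c_{\varepsilon,\beta}$; the range $\beta\ge 1$ is trivial since then $\exp(d^{1-\beta})\le e$ and the normalized problem already satisfies $n(\varepsilon,\mathbf{S}_d)\ge 1$. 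For (iii), $L^{{\rm inf}}>2\ln(2\pi)$ lets us choose $M$ strictly between $2\ln(2\pi)$ and $L^{{\rm inf}}$, so $\eta<1$, and fixing, say, $\varepsilon=1/2$ yields $n(1/2,\mathbf{S}_d)\ge\tfrac{3}{4}\exp(c\,d^{1-\eta})$, which grows faster than any polynomial in $d$ and therefore contradicts any bound of the form $Cd^p\varepsilon^{-q}$.

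No step here is genuinely hard; the main points of care are the translation from the $\liminf$ hypothesis to a pointwise tail bound on $r_i$ (routine definition-chasing), the passage from the product to the sum via $\ln(1+x)\ge x/3$, and the bookkeeping needed to absorb the finitely many initial factors into the constant so that the stated bounds hold for all $d$, not merely for $d$ large. All of the substantive work has already been done in Theorem~\ref{thm:abstract-trigonometric-weighted} and Corollary~\ref{thm:alpha}.
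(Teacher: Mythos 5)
Your proposal is correct and follows essentially the same route as the paper: both rest entirely on the bound \eqref{eq:hfg2}, prove (i) by bounding each factor below by $1+2(2\pi)^{-2R}$, and prove (ii) and (iii) by converting the $\liminf$ hypothesis into a tail bound $2r_i\ln(2\pi)\le\beta\ln i$ for $i\ge i_0$ and then estimating $\prod_{i\ge i_0}(1+2i^{-\beta})\ge\exp\bigl(c\,d^{1-\beta}\bigr)$ (the paper uses $1+2x\ge e^{x}$ on $[0,1]$ where you use $\ln(1+x)\ge x/3$ on $[0,2]$, an immaterial difference). Only be careful to state the translation of the hypothesis with a strict inequality (choose $M$ strictly below $L^{\rm inf}$), which is exactly how you in fact use it in (ii) and (iii).
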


\begin{proof}
The proof is a direct consequence of \eqref{eq:hfg2}. If $r_i\le R<\infty$ for all $i\in\N$, then
\[
e(n,\mathbf S_d)^2\ge 1-n(1+2\cdot (2\pi)^{-2R})^{-d}.
\]
This implies $n(\varepsilon,\mathbf S_d)\ge (1-\varepsilon^2)(1+2\cdot (2\pi)^{-2R})^d$ and finishes the proof of (i).

To prove (ii) and (iii), we observe that there is some $0<\beta<1$ and $i_0\in\N$ such that $2r_i\ln(2\pi)\le \beta\ln(i)$ for $i\ge i_0.$
In the case of (ii) we can even find such $i_0=i_0(\beta)$ for any $0<\beta<1$.
Consequently, for $d$ large enough,
\begin{align*}
n(\varepsilon,\mathbf S_d)&\ge (1-\varepsilon^2)\prod_{i=1}^d(1+2(2\pi)^{-2r_i})\ge 
(1-\varepsilon^2)\prod_{i=i_0}^d(1+2i^{-\beta})\\&\ge  (1-\varepsilon^2)\prod_{i=i_0}^d\exp(i^{-\beta})
=(1-\varepsilon^2)\exp\Bigl(\sum_{i=i_0}^d i^{-\beta}\Bigr)\\
&\ge (1-\varepsilon^2)\exp(c_\beta d^{1-\beta}),
\end{align*}
which shows both (ii) and (iii).
\end{proof}

\subsection{Korobov spaces with product weights}

In a quite similar manner, Corollary~\ref{thm:alpha} can be used to re-prove the lower bounds for numerical integration
on Korobov spaces with product weights, see 
\cite{HW01} or \cite[Section 16.8]{NW10}. 
Again, we first recall the necessary notation, see \cite[Appendix A]{NW08} for details.
For a real parameter $s>1/2$, we define
\[
\varrho_{1,s,\gamma}(h)=\begin{cases}1,\quad &h=0,\\\displaystyle\frac{|2\pi h|^{2s}}{\gamma},&h\in\Z\setminus\{0\}.\end{cases}
\]
The space $H_{1,s,\gamma}$ of square-integrable functions on $[0,1]$ is defined by the norm
\[
\|f\|^2_{H_{1,s,\gamma}}=\sum_{h\in\Z}\varrho_{1,s,\gamma}(h)|\hat f(h)|^2,
\]
where
\[
\hat f(h)=\int_0^1 \exp(-2\pi {\mathrm i}\, hx)f(x){\rm d}x,\quad h\in\Z
\]
are the Fourier coefficients of $f$ and ${\mathrm i}=\sqrt{-1}$ is the imaginary unit.

If $\gamma=(\gamma_{d,j})_{d\in\N, 1\le j\le d}$ is a sequence of positive weights, 
the weighted Korobov space (with product weights $\gamma$)  $H_{d,s,\gamma}$ is defined as
the tensor product
\[
H_{d,s,\gamma}=H_{1,s,\gamma_{d,1}}\otimes \dots \otimes H_{1,s,\gamma_{d,d}}.
\]
If $\alpha_{d,j}=\sqrt{2\gamma_{d,j}}\cdot (2\pi)^{-s}$, the functions $(1,\alpha_{d,j}\cos(2\pi x),\alpha_{d,j}\sin(2\pi x))$ are orthonormal in $H_{1,s,\gamma_{d,j}}$.
We denote their linear span in $H_{1,s,\gamma_{d,j}}$ by $\widetilde H_{1,s,\gamma_{d,j}}$ and
\[
\widetilde H_{d,s,\gamma}=\widetilde H_{1,s,\gamma_{d,1}}\otimes \dots \otimes \widetilde H_{1,s,\gamma_{d,d}}.
\]

Using Corollary~\ref{thm:alpha}, we can re-prove (in a rather straightforward way) the lower bounds of Theorem 16.16 in \cite{NW10}.
Moreover, we show that the same lower bounds apply also to the much smaller subspaces $\widetilde H_{d,s,\gamma}$.
\begin{prop}
Let $\mathbf S_d(f)=\int_{[0,1]^d}f(x){\rm d}x$ denote the multivariate integration problem defined over the sequence of Korobov spaces $H_{d,s,\gamma}$,
where $s>1/2$ and $\gamma=(\gamma_{d,j})_{d\in\N, 1\le j\le d}$ is a bounded sequence. Let $\widetilde H_{d,s,\gamma}$ be the $3^d$-dimensional
subspaces of trigonometric polynomials of degree at most one in each variable in $H_{d,s,\gamma}$.%
\begin{enumerate}
\item [(i)] If $(\mathbf S_d)$ is strongly polynomially tractable on $\widetilde H_{d,s,\gamma}$, then
\[
\displaystyle \sup_{d\in\N}\,\sum_{j=1}^d\gamma_{d,j} \,<\, \infty.
\]
\item [(ii)] If $(\mathbf S_d)$ is polynomially tractable on $\widetilde H_{d,s,\gamma}$, then 
\[
\displaystyle  \limsup_{d\to\infty}\, \frac{\sum_{j=1}^d\gamma_{d,j}}{\ln(d+1)} \,<\, \infty.
\]
\item [(iii)] If $(\mathbf S_d)$ is weakly tractable on $\widetilde H_{d,s,\gamma}$, then
\[
\displaystyle \lim_{d\to\infty}\,\frac{1}{d}\sum_{j=1}^d\gamma_{d,j}\,=\,0.
\]
\end{enumerate}
\end{prop}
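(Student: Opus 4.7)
The plan is to apply Corollary~\ref{thm:alpha} to the three-dimensional subspaces $\widetilde H_{1,s,\gamma_{d,j}}$, whose orthonormal bases are precisely $(1,\alpha_{d,j}\cos(2\pi x),\alpha_{d,j}\sin(2\pi x))$ with $\alpha_{d,j}=\sqrt{2\gamma_{d,j}}\,(2\pi)^{-s}\in(0,1]$ (using that the weights are bounded, so we may also rescale to ensure $\alpha_{d,j}\le 1$). The corollary then gives directly
\[
 e(n,\mathbf S_d)^2 \ge 1 - n\prod_{j=1}^d\bigl(1+2\gamma_{d,j}(2\pi)^{-2s}\bigr)^{-1},
\]
so that
\[
 n(\varepsilon,\mathbf S_d) \ge (1-\varepsilon^2)\prod_{j=1}^d\bigl(1+2\gamma_{d,j}(2\pi)^{-2s}\bigr).
\]

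Next I would linearize this product on a logarithmic scale. Since $\gamma_{d,j}$ is bounded, say by $\Gamma$, there exists a constant $c=c(s,\Gamma)>0$ with $\ln(1+2\gamma_{d,j}(2\pi)^{-2s})\ge c\,\gamma_{d,j}$ for all $d,j$ (using $\ln(1+t)\ge t/(1+t)$ on $[0,2\Gamma(2\pi)^{-2s}]$). Therefore
\[
 \ln n(\varepsilon,\mathbf S_d) \;\ge\; \ln(1-\varepsilon^2) + c\sum_{j=1}^d\gamma_{d,j}.
\]
All three parts then become direct consequences of the respective definitions of tractability, once combined with this lower bound.

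For (i), strong polynomial tractability yields $n(\varepsilon,\mathbf S_d)\le C\varepsilon^{-q}$ independently of~$d$, and plugging this into the display forces $\sum_{j=1}^d\gamma_{d,j}$ to be bounded uniformly in $d$. For (ii), polynomial tractability gives $\ln n(\varepsilon,\mathbf S_d)\le \ln C + p\ln d + q\ln(1/\varepsilon)$, so $c\sum_{j=1}^d\gamma_{d,j}\le p\ln d + O(1)$ for any fixed $\varepsilon$, which yields the stated $\limsup$ bound against $\ln(d+1)$. For (iii), weak tractability asserts that $\ln n(\varepsilon,\mathbf S_d)/(\varepsilon^{-1}+d)\to 0$ as $\varepsilon^{-1}+d\to\infty$; fixing any $\varepsilon\in(0,1)$ and letting $d\to\infty$ forces $\frac{1}{d}\sum_{j=1}^d\gamma_{d,j}\to 0$.

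I do not expect any serious obstacle: the heart of the argument is already contained in Corollary~\ref{thm:alpha}, and the remainder is bookkeeping with logarithms. The only mild point to be careful about is ensuring $\alpha_{d,j}\le 1$ so that Corollary~\ref{thm:alpha} applies verbatim; this is automatic once $s$ is fixed and $\gamma_{d,j}$ is bounded, and otherwise one may replace each $\alpha_{d,j}$ by $\min(\alpha_{d,j},1)$ in the lower bound without affecting the asymptotics that drive (i)--(iii).
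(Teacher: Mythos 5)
Your proof is correct and follows essentially the same route as the paper: apply Corollary~\ref{thm:alpha} with $\alpha_{d,j}=\sqrt{2\gamma_{d,j}}\,(2\pi)^{-s}$, obtain $n(\varepsilon,\mathbf S_d)\ge(1-\varepsilon^2)\prod_{j=1}^d\bigl(1+2\gamma_{d,j}(2\pi)^{-2s}\bigr)$, and use the boundedness of the weights to get $\ln n(\varepsilon,\mathbf S_d)\ge \ln(1-\varepsilon^2)+c\sum_{j=1}^d\gamma_{d,j}$, from which (i)--(iii) follow by the tractability definitions. The only cosmetic difference is that for (i) the paper uses the cruder inequality $\prod_j(1+x_j)\ge\sum_j x_j$ (which needs no boundedness), while your logarithmic linearization covers all three parts at once; your extra remark about ensuring $\alpha_{d,j}\le 1$ is a reasonable precaution that the paper leaves implicit.
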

\begin{proof}
We put $\alpha_{d,j}=\sqrt{2\gamma_{d,j}}\cdot (2\pi)^{-s}$ and obtain by Corollary~\ref{thm:alpha}
\[
n(\varepsilon,\mathbf S_d)\ge(1-\varepsilon^2)\prod_{j=1}^d(1+\alpha_{d,j}^2)= (1-\varepsilon^2)\prod_{j=1}^d \Bigl(1+2\gamma_{d,j}(2\pi)^{-2s}\Bigr).
\]
If $(\mathbf S_d)$ is strongly polynomially tractable, we observe from
\[
n(\varepsilon,\mathbf S_d)\ge (1-\varepsilon^2) \cdot 2\cdot (2\pi)^{-2s}\sum_{j=1}^d\gamma_{d,j}
\]
that $\sum_{j=1}^d\gamma_{d,j}$ must be uniformly bounded in $d\in\N$.
\medskip

If $(\mathbf S_d)$ is polynomially tractable or weakly tractable, we use the boundedness of $\gamma$ to estimate
\begin{equation*}%
\ln n(\varepsilon,\mathbf S_d)\ge \ln(1-\varepsilon^2)+\sum_{j=1}^d \ln\Bigl(1+2\gamma_{d,j}(2\pi)^{-2s}\Bigr)
\ge \ln(1-\varepsilon^2) + C \sum_{j=1}^d\gamma_{d,j}.
\end{equation*}
This estimate proves both (ii) and (iii).
\end{proof}

\section{New variants of Schur's Theorem}
\label{sec:new-schur}

In this section we present several variants of the uniform lower 
bound for the Schur
product obtained in \cite{Vy19}
and several consequences for the tractability of numerical integration. 

\subsection{Modifications of Schur's Theorem}

The first generalization of Proposition \ref{prop:main2} 
deals with matrices with reduced rank. 
This was first observed in \cite{Inder}.

\begin{thm}[\cite{Inder}]\label{thm:1} 
Let $M\in\R^{n\times n}$ be a positive semi-definite matrix with 
rank $r$.
Then
$$
M\circ M\succeq \frac{1}{r}({\rm diag} M)({\rm diag} M)^T.
$$
\end{thm}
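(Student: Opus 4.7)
The plan is to adapt the proof of Proposition~\ref{prop:main2}, with the factor $1/n$ replaced by $1/r$ because the spectral data of a rank-$r$ matrix genuinely live in an $r$-dimensional space. I would pursue a direct linear-algebraic argument rather than the Gegenbauer route from the paper (though that also works, with $\mathbb S^{n-1}$ replaced by $\mathbb S^{r-1}$ and the polynomial $\mathbf C_2^{(r-2)/2}(t)\propto t^2-1/r$ used in place of $\mathbf C_2^{(n-2)/2}$).

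Concretely, write $M=\sum_{k=1}^r\lambda_k v_kv_k^T$ with $\lambda_k>0$ and $v_1,\dots,v_r$ orthonormal. For arbitrary $x\in\R^n$, introducing the scalars $a_{k,l}:=\sum_{i=1}^n x_iv_{k,i}v_{l,i}$, a short double-sum expansion yields
\begin{align*}
x^T(M\circ M)x &= \sum_{k,l=1}^r\lambda_k\lambda_l\,a_{k,l}^2,\\
x^T(\diag M)(\diag M)^Tx &= \Bigl(\sum_{k=1}^r\lambda_k a_{k,k}\Bigr)^2.
\end{align*}
Applying Cauchy--Schwarz to the $r$-term sum $\sum_{k=1}^r 1\cdot(\lambda_k a_{k,k})$ gives
\[
\Bigl(\sum_{k=1}^r\lambda_k a_{k,k}\Bigr)^2\le r\sum_{k=1}^r\lambda_k^2 a_{k,k}^2\le r\sum_{k,l=1}^r\lambda_k\lambda_l\, a_{k,l}^2,
\]
where the last step merely keeps all nonnegative terms of the double sum. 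Dividing by $r$ is exactly the claim.

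The only real obstacle is conceptual: realising that the Cauchy--Schwarz constant is controlled by the number of nonzero eigenvalues, not by the ambient dimension $n$. Everything else is routine bookkeeping. As a sanity check, the bound is sharp at both extremes: for $r=1$, with $M=vv^T$, we have $M\circ M=(v\circ v)(v\circ v)^T=(\diag M)(\diag M)^T$ with equality, and for $r=n$ we recover Proposition~\ref{prop:main2} as stated.
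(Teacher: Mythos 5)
Your argument is correct and is essentially the paper's own (second) route: the spectral decomposition $M=\sum_{k=1}^r\lambda_k v_kv_k^T$ is exactly the factorization $M=AA^T$ with $A\in\R^{n\times r}$ suggested there, and your expansion, dropping of the nonnegative off-diagonal terms, and Cauchy--Schwarz over $r$ terms is the same computation as in \cite{Vy19} (and as in the proof of Theorem~\ref{thm:2}) with the inner index running to $r$ instead of $n$. The key point you identify -- that the Cauchy--Schwarz constant is governed by the rank -- is precisely the paper's observation, so there is nothing to add.
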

\begin{proof}
If $M$ has also ones on the diagonal, then we may use the truncated singular value decomposition of $M$
and observe that $M$ is a Gram matrix of a set of $n$ vectors in ${\mathbb S}^{r-1}$.
The rest of the proof then follows in the same way as in Proposition \ref{prop:main2}.
Alternatively, one may follow \cite{Vy19} but write $M=AA^T$, where $A\in\R^{n\times r}$.
\end{proof}

The next version deals with the Schur product of two possibly 
different matrices $M\not=N.$ In this sense, it addresses a problem left open in \cite{Vy19}.

\begin{thm}\label{thm:2} Let $M,N\in\R^{n\times n}$ be positive semi-definite matrices
with $M=AA^T$ and $N=BB^T$ with $A,B\in \R^{n\times D}$ and $D\ge\max({\rm rank }(M),{\rm rank }(N))$.
Then, for every $c\in\R^n$,
\begin{equation}\label{eq:Schur:x}
\sum_{j,k=1}^n c_jc_k M_{j,k}N_{j,k}\ge \frac{1}{D}\Bigl(\sum_{j=1}^n c_j\langle A^j,B^j\rangle\Bigr)^2,
\end{equation}
where $A^j,B^j$ are the rows of $A$ and $B$, respectively.
\end{thm}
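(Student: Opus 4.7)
The plan is to imitate the factorization-based proof of Proposition~\ref{prop:main2} and Theorem~\ref{thm:1} given in \cite{Vy19}, now adapted to two possibly different factorizations $M=AA^T$ and $N=BB^T$. The key idea is that both sides of \eqref{eq:Schur:x} can be read off as the squared trace and squared Frobenius norm of a single auxiliary $D\times D$ matrix, after which the inequality reduces to Cauchy-Schwarz.

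Concretely, with $A^j,B^j\in\R^D$ denoting the rows of $A$ and $B$ viewed as column vectors, I would introduce
$$
C \;:=\; \sum_{j=1}^n c_j\, A^j (B^j)^T \;\in\; \R^{D\times D}.
$$
A direct computation gives $\mathrm{tr}(C) = \sum_{j=1}^n c_j\langle A^j,B^j\rangle$, which is the quantity being squared on the right-hand side of \eqref{eq:Schur:x}. Expanding $CC^T$ and using $\mathrm{tr}(A^j(A^k)^T)=\langle A^j,A^k\rangle$, one similarly obtains
$$
\|C\|_F^2 \;=\; \mathrm{tr}(CC^T) \;=\; \sum_{j,k=1}^n c_jc_k\,\langle A^j,A^k\rangle\,\langle B^j,B^k\rangle \;=\; \sum_{j,k=1}^n c_jc_k\, M_{j,k}N_{j,k},
$$
which is exactly the left-hand side of \eqref{eq:Schur:x}.

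Thus \eqref{eq:Schur:x} reduces to $\|C\|_F^2 \ge \tfrac{1}{D}\,\mathrm{tr}(C)^2$, and this is precisely the Cauchy-Schwarz inequality for the Frobenius inner product applied to the pair $(I_D, C)$: $\mathrm{tr}(C)^2 = \langle I_D, C\rangle_F^2 \le \|I_D\|_F^2\,\|C\|_F^2 = D\,\|C\|_F^2$. I do not anticipate a substantive obstacle; the only delicate points are the correct bookkeeping in the definition of $C$ and the two trace computations. Note that the factor $1/D$ arises exactly as $1/\|I_D\|_F^2$, which clarifies why the hypothesis $D\ge\max(\mathrm{rank}\,M,\mathrm{rank}\,N)$ is the natural one: both factorizations must fit into the common ambient space $\R^D$, and shrinking $D$ to this maximum rank sharpens the constant. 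Specializing to $M=N$, $A=B$, and $D=\mathrm{rank}\,M$ recovers Theorem~\ref{thm:1}, and taking further $D=n$ recovers Proposition~\ref{prop:main2}.
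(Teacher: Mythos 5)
Your proof is correct and is essentially the paper's own argument in coordinate-free form: the matrix $C=\sum_j c_j A^j (B^j)^T$ is exactly the array $\bigl(\sum_j c_jA_{j,l}B_{j,m}\bigr)_{l,m}$ that appears in the paper's expansion, and your single Cauchy--Schwarz step against $I_D$ combines the paper's two steps (dropping the off-diagonal terms and then applying Cauchy--Schwarz to the diagonal).
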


\begin{proof}
The proof is again similar to \cite{Vy19}. We write
\begin{align*}
\sum_{j,k=1}^n c_jc_k M_{j,k}N_{j,k}&=
\sum_{j,k=1}^n c_jc_k\sum_{l=1}^D A_{j,l}A_{k,l}\sum_{m=1}^D B_{j,m}B_{k,m}\\
&=\sum_{l,m=1}^D\Bigl(\sum_{j=1}^n c_jA_{j,l}B_{j,m}\Bigr)^2
\ge \sum_{l=1}^D \Bigl(\sum_{j=1}^n c_jA_{j,l}B_{j,l}\Bigr)^2\\
&\ge \frac{1}{D}\Bigl(\sum_{j=1}^n c_j \sum_{l=1}^D A_{j,l}B_{j,l}\Bigr)^2.
\end{align*}
\end{proof}

\begin{rem}
Using $(AB^T)_{j,j}=\langle A^j,B^j\rangle$, 
the estimate
\eqref{eq:Schur:x} can be written as
$$
M\circ N\succeq \frac{1}{D}({\rm diag }(AB^T))({\rm diag}(AB^T))^T.
$$
\end{rem}

The last generalization of 
Schur's Theorem, that, in a sense,
combines Theorem \ref{thm:1} and Theorem \ref{thm:2},
is the one we shall use later on.

\begin{thm}\label{thm:3} Let $M\in\R^{n\times n}$ be a positive semi-definite matrix
with rank $r$.
Let $M=AA^T=BB^T$ with $A,B\in\R^{n\times D}$
for some $D\ge r.$ 
Then, for every $c\in\R^n$,
\begin{equation}\label{eq:thm:3}
\sum_{j,k=1}^n c_jc_k M^2_{j,k}\ge \frac{1}{2r}\Bigl(\sum_{j=1}^n c_j\langle A^j,B^j\rangle\Bigr)^2,
\end{equation}
where $A^j,B^j\in\R^{D}$ are the rows of $A$ and $B$, respectively.
\end{thm}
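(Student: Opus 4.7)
The plan is to reduce the statement to Theorem~\ref{thm:2} by observing that the rows $A^j, B^j \in \R^D$ only ever live in a subspace of dimension at most $2r$, so that the ambient dimension $D$ can effectively be replaced by $2r$. This is precisely how Theorem~\ref{thm:3} sits ``between'' Theorem~\ref{thm:1} (which gains from rank but requires $M=N$) and Theorem~\ref{thm:2} (which handles $M\neq N$ but pays the dimension $D$).

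More concretely, since $M = AA^T = BB^T$ has rank $r$, both $A$ and $B$ have rank $r$, and therefore their row spaces $R_A, R_B \subseteq \R^D$ are each $r$-dimensional. Hence the subspace $V := R_A + R_B \subseteq \R^D$ has dimension $s \le 2r$. Choosing any isometric embedding $\iota : V \hookrightarrow \R^{s}$ (i.e.\ expressing vectors in $V$ in an orthonormal basis of $V$) and setting
\[
 \tilde A^j := \iota(A^j), \qquad \tilde B^j := \iota(B^j), \qquad j=1,\dots,n,
\]
we obtain matrices $\tilde A, \tilde B \in \R^{n\times s}$ whose rows preserve all inner products:
\[
 \tilde A\tilde A^T = AA^T = M, \quad \tilde B\tilde B^T = BB^T = M, \quad \langle \tilde A^j,\tilde B^j\rangle = \langle A^j, B^j\rangle.
\]
In particular, $\tilde A$ and $\tilde B$ are factorizations of the same matrix $M$ with $s \ge r$, so the hypotheses of Theorem~\ref{thm:2} are satisfied with $D$ replaced by $s$.

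Applying Theorem~\ref{thm:2} to $\tilde A, \tilde B$ therefore yields
\[
 \sum_{j,k=1}^n c_j c_k M_{j,k}^2 \;\ge\; \frac{1}{s}\Bigl(\sum_{j=1}^n c_j \langle \tilde A^j,\tilde B^j\rangle\Bigr)^2 \;\ge\; \frac{1}{2r}\Bigl(\sum_{j=1}^n c_j \langle A^j,B^j\rangle\Bigr)^2,
\]
which is~\eqref{eq:thm:3}.

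There is no real obstacle here; the only non-mechanical step is spotting that the rows of $A$ and $B$ jointly span a subspace of dimension at most $2r$, rather than $D$. I would also note in passing a more direct route that avoids the embedding altogether: the expansion used in the proof of Theorem~\ref{thm:2} identifies $\sum_{j,k}c_jc_k M_{j,k}^2 = \|T\|_F^2$ with $T := A^T\mathrm{diag}(c)B \in \R^{D\times D}$, and one checks immediately that $\mathrm{tr}(T) = \sum_j c_j\langle A^j,B^j\rangle$ while $\mathrm{rank}(T)\le \mathrm{rank}(A) = r$. The elementary trace--rank inequality $|\mathrm{tr}(T)|^2 \le \mathrm{rank}(T)\,\|T\|_F^2$ (via Cauchy--Schwarz on the singular values) then already gives the slightly sharper constant $1/r$; in either form, the claimed bound $1/(2r)$ follows.
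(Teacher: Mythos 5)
Your main argument is correct and is essentially the paper's own proof in streamlined form: the paper also reduces Theorem~\ref{thm:3} to Theorem~\ref{thm:2}, constructing via the singular value decomposition matrices $G,H\in\R^{n\times 2r}$ with $M=GG^T=HH^T$ and $\langle G^j,H^j\rangle=\langle A^j,B^j\rangle$ by projecting onto the common span of the columns of $V$ and $W$, which is precisely your subspace $R_A+R_B$ of dimension at most $2r$; your isometric-embedding phrasing expresses the same idea more directly and makes it transparent why $2r$ columns suffice. Your closing aside, however, is a genuinely different argument and deserves emphasis: with $T=A^T\mathrm{diag}(c)B$ one indeed has $\|T\|_F^2=\sum_{j,k}c_jc_kM_{j,k}^2$, $\operatorname{tr}(T)=\sum_j c_j\langle A^j,B^j\rangle$ and $\operatorname{rank}(T)\le\operatorname{rank}(A)=r$, and writing the SVD $T=\sum_i\sigma_iu_iv_i^T$ gives $|\operatorname{tr}(T)|\le\sum_i\sigma_i\le\sqrt{\operatorname{rank}(T)}\,\|T\|_F$ by Cauchy--Schwarz, so this route yields the constant $1/r$ instead of $1/(2r)$. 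That sharper constant is exactly what the paper poses as its first Open Problem (with a pointer to Khare's work), so your trace--rank remark not only proves the stated theorem but also the conjectured improvement, and would correspondingly improve the bounds in Corollary~\ref{cor:curse_general} and Corollary~\ref{cor:curse} to $1-n\,2^{-d}$.
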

\begin{proof}
We show that there exist two matrices $G,H\in\R^{n\times 2r}$ 
with rows denoted by $G^j$ and $H^j$, respectively,
such that $M=GG^T=HH^T$ and $\langle G^j,H^j\rangle=\langle A^j,B^j\rangle$ 
for every $j=1,\dots,n$. The proof then follows by an application
of Theorem \ref{thm:2} with $M=N$ and $2r$ instead of $r$.

Using the singular value decomposition theorem, we can write 
$A=U\Sigma V^T$ and $B=U\Sigma W^T$, where $U\in\R^{n\times r},\Sigma \in \R^{r\times r}$
and $V,W\in \R^{D\times r}$. Here, $U,V$ and $W$ have 
orthonormal columns and $\Sigma$ is a diagonal matrix. Furthermore, 
\begin{align*}
\langle A^j,B^j\rangle 
= (AB^T)_{j,j}=e_j^T(U\Sigma V^T)(W\Sigma U^T)e_j=\varepsilon_j^TV^TW\varepsilon_j,
\end{align*}
where $\varepsilon_j=\Sigma U^Te_j\in\R^r$ 
and $(e_j)_{j=1}^n$ is the canonical basis of $\R^n$. In the same way, 
we are looking for $G=U\Sigma X^T$ and $H=U\Sigma Z^T$ with matrices $X,Z\in\R^{2r\times r}$
with orthonormal columns and
\begin{equation}\label{eq:Had:1}
\langle G^j,H^j\rangle=\varepsilon_j^T X^TZ\varepsilon_j
=\varepsilon_j^TV^TW\varepsilon_j=\langle A^j,B^j\rangle,\quad j=1,\dots,n.
\end{equation}

The matrix $V^TW$ is formed by the scalar products of the column vectors of $V$ and $W$, respectively.
Using an orthogonal projection onto their common linear 
span (which has dimension at most $2r$), we can find $X,Z\in \R^{2r\times r}$ such that
$X^TZ=V^TW$, which is even stronger than \eqref{eq:Had:1}.
\end{proof}

\subsection{Applications to numerical integration}

Theorem \ref{thm:3} allows us to extend Theorem 
\ref{thm:curse_tensor_product} to a larger class of tensor product problems
with $e(0,\mathbf S_d) = \Vert \mathbf h_d \Vert =1$.

\begin{thm}\label{prop:unified}
Let $M$ be a reproducing kernel on a set $D$ and let $K=M^2$. Denote by $H(M)$ and $H(K)$ the Hilbert spaces with reproducing kernel $M$ and $K$, respectively.
Let $(b_\ell)_{\ell\in I}$ and $(\tilde b_\ell)_{\ell\in I}$ be two orthonormal bases of $H(M)$ and
\[
 g = \sum_{\ell\in I} b_\ell \tilde b_\ell \in H(K).
\]
We consider the normalized problem $S=\langle\cdot,h\rangle$ with $h=g / \Vert g\Vert$ on $H(K)$.
Then
\[
 e(n,S)^2 \ge 1 - \frac{2n}{\Vert g\Vert^2}.
\]
\end{thm}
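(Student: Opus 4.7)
The plan is to reduce the statement to the combination of Proposition \ref{prop:error_vs_definiteness}(ii) and the new variant of Schur's theorem in Theorem \ref{thm:3}. Since $h=g/\|g\|$ has unit norm in $H(K)$, Proposition \ref{prop:error_vs_definiteness}(ii) guarantees the bound $e(n,S)^2\ge 1-\alpha^{-1}$ with the choice $\alpha=\|g\|^2/(2n)$ as soon as one shows that, for every $x_1,\dots,x_n\in D$, the matrix
\[
\bigl(K(x_j,x_k)-\alpha\, h(x_j)h(x_k)\bigr)_{j,k\le n}
\]
is positive semi-definite. Substituting $K=M^2$ and $h=g/\|g\|$, this PSD condition is equivalent to verifying
\[
\sum_{j,k=1}^n c_jc_k\,M(x_j,x_k)^2\ \ge\ \frac{1}{2n}\Bigl(\sum_{j=1}^n c_j g(x_j)\Bigr)^2\qquad\text{for all }c\in\R^n.
\]

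To obtain this inequality I would manufacture the matrices needed by Theorem \ref{thm:3} from the two orthonormal bases. Setting
\[
A_{j,\ell}=b_\ell(x_j),\qquad B_{j,\ell}=\tilde b_\ell(x_j),
\]
the reproducing kernel expansion $M(x,y)=\sum_{\ell\in I}b_\ell(x)b_\ell(y)=\sum_{\ell\in I}\tilde b_\ell(x)\tilde b_\ell(y)$ yields $(AA^T)_{j,k}=(BB^T)_{j,k}=M(x_j,x_k)$, while the definition of $g$ gives $\langle A^j,B^j\rangle=\sum_{\ell\in I}b_\ell(x_j)\tilde b_\ell(x_j)=g(x_j)$. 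The Gram matrix $(M(x_j,x_k))_{j,k}$ has some rank $r\le n$, so Theorem \ref{thm:3} applies and delivers
\[
\sum_{j,k=1}^n c_jc_k\,M(x_j,x_k)^2\ \ge\ \frac{1}{2r}\Bigl(\sum_{j=1}^n c_j g(x_j)\Bigr)^2\ \ge\ \frac{1}{2n}\Bigl(\sum_{j=1}^n c_j g(x_j)\Bigr)^2,
\]
which is exactly what was needed. Combining with Proposition \ref{prop:error_vs_definiteness}(ii) then gives the claimed $e(n,S)^2\ge 1-2n/\|g\|^2$.

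The one technical wrinkle I expect is that the index set $I$ may be infinite, whereas Theorem \ref{thm:3} is formally stated for matrices $A,B\in\R^{n\times D}$ with finite $D$. I would deal with this by observing that the proof of Theorem \ref{thm:3} only uses the SVD of $A$ and $B$ and the finiteness of the rank $r$ of the common Gram matrix $M=AA^T=BB^T$; the number of columns of $A$ and $B$ plays no genuine role. Equivalently, one can project the rows of $A$ and $B$ onto the (at most $2r$-dimensional) common span of the columns of their right singular vectors, producing honest finite-column matrices with the same inner-product data $\langle A^j,B^j\rangle=g(x_j)$ and the same outer products $AA^T=BB^T=M$, and then apply Theorem \ref{thm:3} verbatim. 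Beyond this bookkeeping point, the argument is a direct assembly of the previously established pieces.
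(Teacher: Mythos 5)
Your proposal is correct and follows essentially the same route as the paper: factor the Gram matrix of $M$ at the sample points via the two orthonormal bases, apply Theorem~\ref{thm:3} (with rank at most $n$) to get the quadratic-form inequality with the diagonal of $B\tilde B^T$ giving $g(x_j)$, and conclude with Proposition~\ref{prop:error_vs_definiteness}. Your extra remark on handling a possibly infinite index set $I$ is a valid bookkeeping point that the paper's proof passes over silently.
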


\begin{proof}
For any $x,y\in D$, we have
\[
 M(x,y)=\sum_{\ell\in I} b_\ell(x) b_\ell(y) = 
 \sum_{\ell\in I} \tilde b_\ell(x) \tilde b_\ell(y). %
\]
Let $x_1,\hdots,x_n\in D$ and let $M=(M(x_j,x_k))_{j,k\le n}$. 
Then
\[
 M = B B^T = \tilde B \tilde B^T,
\]
where $B=(b_\ell(x_j))_{j\le n,\ell\in I}$ and $\tilde B=(\tilde b_\ell(x_j))_{j\le n,\ell\in I}$.
Theorem~\ref{thm:3} yields that
\[
 \sum_{j,k=1}^n c_jc_k M^2_{j,k}
 \ge \frac{1}{2n}\Bigl(\sum_{j=1}^n c_j(B \tilde B^T)_{jj}\Bigr)^2
 = \frac{\Vert g\Vert^2}{2n}\Bigl(\sum_{j=1}^n c_j  h(x_j) \Bigr)^2
\]
and thus the desired lower bound follows from Proposition~\ref{prop:error_vs_definiteness}.
\end{proof}

Let us observe that Theorem~\ref{thm:curse_tensor_product} is obtained
by considering particular orthonormal bases of $H(M_d)$. Namely, we take 
\begin{equation}\label{eq:bl}
 b_\ell(x)=%
\prod_{i=1}^d e_{\ell_i}(x_i)
 \quad\text{for}\quad 
 \ell\in \{1,2\}^d
\end{equation}
and $\tilde b_{\ell}=b_{\ell}$, $\ell\in\{1,2\}^d$.
Then we have
\[
 g(x)=\prod_{i=1}^d \left(e_1(x_i)^2 +e_2(x_i)^2\right).%
\]
and we obtain Theorem~\ref{thm:curse_tensor_product} (up to a factor 2). %

Another interesting choice of $(b_{\ell})_{{\ell}\in I}$ 
and $(\tilde b_{\ell})_{{\ell}\in I}$ of $H(M_d)$ is the following. We take again
$b_{\ell}$ defined by \eqref{eq:bl} and 
\begin{equation}\label{eq:bl'}
 \tilde b_\ell(x)=\prod_{i=1}^d \tilde e_{\ell_i}^{(i)}(x_i) 
 \quad\text{for}\quad 
 \ell\in \{1,2\}^d
\end{equation}
where
\[
 \left(\begin{array}{c}
 \tilde e_1^{(i)}\\
 \tilde e_2^{(i)}
 \end{array}\right)
 =
 U_i
 \left(\begin{array}{c}
 e_1\\
 e_2
 \end{array}\right)
\]
and $U_i\in\R^{2\times 2}$ is an orthogonal matrix. 

If $U_i$ is the identity matrix, we obtain $\tilde e^{(i)}_1=e_1$, $\tilde e^{(i)}_2=e_2$ and
$
\tilde e^{(i)}_1\cdot e_1+\tilde e^{(i)}_2\cdot e_2=e_1^2+e_2^2.
$
If, on the other hand, we choose
\[U_i=\left(\begin{matrix} \cos\varphi_i & \sin\varphi_i\\
\sin\varphi_i&-\cos\varphi_i
\end{matrix}\right),\quad \varphi_i\in[0,2\pi]
\] being a reflection across a line with angle $\varphi_i/2$, we obtain
\[
\tilde e^{(i)}_1\cdot e_1+\tilde e^{(i)}_2\cdot e_2=
\cos\varphi_i\cdot(e_1^2-e_2^2)+2\sin\varphi_i\cdot e_1e_2.
\]
Of course, we can mix these two examples by taking 
a different choice of $U_i$ for each dimension $i\le d$,
which leads to the following result.
\begin{cor}\label{cor:curse_general}
Let $F_1$ be a RKHS on $D_1$. 
Assume that there are functions $e_1$ and $e_2$ on $D_1$
such that $e_1^2, e_2^2$ and $\sqrt{2} e_1 e_2$ are orthonormal in $F_1$.
Let %
$$
\mathbf h_d(x)=\prod_{i=1}^d h_i(x_i),
$$
where $h_i\in{\rm span}\{e_1^2+e_2^2\}
\cup{\rm span}\{e_1^2-e_2^2,e_1e_2\}$ has unit norm $\Vert h_i\Vert=1$.
Then the tensor product problem $\mathbf S_d=\langle \cdot, \mathbf h_d\rangle$ satisfies
\[
e(n,\mathbf S_d)^2 \geq 1 - n \,  2^{-d+1} .
 \]
In particular, it suffers from the curse of dimensionality.  
\end{cor}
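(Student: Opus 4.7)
The plan is to apply Theorem~\ref{prop:unified} with the product kernel $M_d(x,y) := \prod_{i=1}^d M_1(x^i,y^i)$, where $M_1(x,y) := e_1(x)e_1(y) + e_2(x)e_2(y)$. Then $K_d := M_d^2$ coincides with $\mathbf K_d$ and $H(K_d) = \mathbf F_d$. For the two orthonormal bases of $H(M_d)$ I will take the product bases $(b_\ell)_{\ell\in\{1,2\}^d}$ and $(\tilde b_\ell)_{\ell\in\{1,2\}^d}$ from~\eqref{eq:bl} and~\eqref{eq:bl'}, with the orthogonal matrices $U_1,\dots,U_d \in \R^{2\times 2}$ chosen adaptively to the given factors $h_1,\dots,h_d$.

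By the product structure,
\[
 g \;=\; \sum_{\ell \in \{1,2\}^d} b_\ell\, \tilde b_\ell
 \;=\; \prod_{i=1}^d \bigl( e_1\, \tilde e_1^{(i)} + e_2\, \tilde e_2^{(i)} \bigr),
\]
so it suffices to arrange each factor to equal $\sqrt{2}\,h_i$. If $h_i\in{\rm span}\{e_1^2+e_2^2\}$, then unit norm together with $\|e_1^2+e_2^2\|^2 = 2$ forces $h_i = \pm(e_1^2+e_2^2)/\sqrt{2}$, and the choice $U_i = \pm I$ yields $e_1\tilde e_1^{(i)} + e_2\tilde e_2^{(i)} = \pm(e_1^2+e_2^2) = \sqrt{2}\,h_i$. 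If instead $h_i\in{\rm span}\{e_1^2-e_2^2,\,e_1e_2\}$, I use $\|e_1^2-e_2^2\|^2 = 2$, $\|e_1e_2\|^2 = 1/2$ together with the orthogonality of these two vectors to parametrize the unit circle of this subspace as $h_i = \cos\varphi_i\cdot(e_1^2-e_2^2)/\sqrt{2} + \sqrt{2}\sin\varphi_i\cdot e_1e_2$ for some $\varphi_i\in[0,2\pi]$; taking $U_i$ to be the reflection through the line at angle $\varphi_i/2$, the formula displayed in the paragraph preceding the corollary gives $e_1\tilde e_1^{(i)}+e_2\tilde e_2^{(i)} = \cos\varphi_i(e_1^2-e_2^2) + 2\sin\varphi_i\,e_1e_2 = \sqrt{2}\,h_i$.

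Taking the product over $i$, I obtain $g = \pm 2^{d/2}\,\mathbf h_d$, so $\|g\|^2 = 2^d\prod_i\|h_i\|^2 = 2^d$ in $H(K_d) = \mathbf F_d$ and $g/\|g\| = \pm\mathbf h_d$. Since the worst case error of $\langle\cdot,\mathbf h_d\rangle$ is invariant under replacing $\mathbf h_d$ by $-\mathbf h_d$, Theorem~\ref{prop:unified} yields $e(n,\mathbf S_d)^2 \ge 1 - 2n/\|g\|^2 = 1 - n\cdot 2^{-d+1}$. The only real step is the parametrization of the two-dimensional span and matching it to the reflection formula already spelled out in the excerpt; everything else is a direct expansion of the product and an invocation of Theorem~\ref{prop:unified}, so I do not expect any serious obstacle.
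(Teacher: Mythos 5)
Your proposal is correct and follows essentially the same route as the paper: the corollary is exactly the "mixed" choice of orthogonal matrices $U_i$ (identity or a reflection at angle $\varphi_i/2$, per coordinate) in the bases \eqref{eq:bl} and \eqref{eq:bl'}, fed into Theorem~\ref{prop:unified}, with each factor of $g$ equal to $\sqrt{2}\,h_i$ and hence $\Vert g\Vert^2=2^d$. Your explicit norm computations on the spans and the handling of the sign are fine and just spell out what the paper leaves to the discussion preceding the corollary.
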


For the next result we take again 
the space of trigonometric polynomials of degree 1, 
see Corollary~\ref{cor1}.

\begin{cor}\label{cor:curse} 
Let $F_1$ be the RKHS on $[0,1]$ with the  orthonormal system 
$1$, $\cos (2\pi x)$ and $\sin (2\pi x)$.  
Let $d\ge 2$ and let $\{\varphi_i\}_{i=1}^\infty\subset[0,2\pi]$ be a bounded sequence.
Let
\[
\mathbf h_d(x)=\prod_{i=1}^d h_i(x_i),
\]
where
\begin{equation}\label{eq:phi1}
h_i(x_i)= \cos\varphi_i 
\cdot \cos(2\pi x_{i})+\sin\varphi_i\cdot \sin(2\pi x_{i})=\cos(2\pi x_i-\varphi_i)
\end{equation}
or $h_i=1$. Then the corresponding problem $\mathbf S_d=\langle \cdot,\mathbf h_d\rangle$ satisfies
\[
e(n,\mathbf S_d)^2 \geq 1 - n \,  2^{-d+1}
\]
and the problem suffers from the curse of dimensionality.
\end{cor}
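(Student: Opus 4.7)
The plan is to recognize Corollary \ref{cor:curse} as essentially a restatement of Corollary \ref{cor:curse_general} for the specific RKHS $F_1$ of trigonometric polynomials of degree one considered in Section \ref{sec:trig}. So the task reduces to identifying the relevant $e_1,e_2$ and verifying that each admissible $h_i$ fits into one of the two spans allowed by Corollary \ref{cor:curse_general}.

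First I would reuse the construction from Section \ref{sec:trig}: take $e_1(x)=2^{1/4}\cos(\pi x)$ and $e_2(x)=2^{1/4}\sin(\pi x)$, so that $e_1^2,e_2^2,\sqrt{2}\,e_1e_2$ form an orthonormal triple in $F_1$. The standard product-to-sum identities then give
\[
e_1^2+e_2^2=\sqrt{2},\qquad e_1^2-e_2^2=\sqrt{2}\cos(2\pi x),\qquad 2e_1e_2=\sin(2\pi x),
\]
so $\operatorname{span}\{e_1^2+e_2^2\}$ is the line of constant functions, while $\operatorname{span}\{e_1^2-e_2^2,e_1e_2\}=\operatorname{span}\{\cos(2\pi x),\sin(2\pi x)\}$.

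Next I would check the two cases allowed for $h_i$. The choice $h_i=1$ lies in the first span and has unit norm because $1$ is the first element of the orthonormal basis of $F_1$. In the other case, expanding $\cos(2\pi x_i-\varphi_i)=\cos\varphi_i\cdot\cos(2\pi x_i)+\sin\varphi_i\cdot\sin(2\pi x_i)$ and using that $\cos(2\pi x),\sin(2\pi x)$ are orthonormal in $F_1$ gives $\|h_i\|^2=\cos^2\varphi_i+\sin^2\varphi_i=1$, placing $h_i$ in the second span with unit norm. Hence the hypotheses of Corollary \ref{cor:curse_general} are met, and applying it yields the announced bound
\[
e(n,\mathbf S_d)^2\ge 1-n\cdot 2^{-d+1}.
\]
The curse follows at once by rearranging $e(n,\mathbf S_d)\le\varepsilon<1$ into $n\ge(1-\varepsilon^2)\cdot 2^{d-1}$, which grows exponentially in $d$.

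There is no real obstacle in this argument, since all the substance has already been absorbed into Corollary \ref{cor:curse_general} (which itself draws on the new Schur-type bound Theorem \ref{thm:3}). The only point worth noting is that the sequence $\{\varphi_i\}$ enters purely through the choice of unit vector in $\operatorname{span}\{\cos(2\pi x),\sin(2\pi x)\}$, so the boundedness assumption plays no quantitative role: the constant $2^{-d+1}$ on the right-hand side is independent of which phases $\varphi_i$ and which of the two span options are selected at each coordinate, which is exactly what makes the bound uniform in the shape of $\mathbf h_d$.
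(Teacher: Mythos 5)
Your proof is correct and takes essentially the same route as the paper: Corollary~\ref{cor:curse} is indeed just Corollary~\ref{cor:curse_general} applied to the trigonometric system of Section~\ref{sec:trig}, with $e_1=2^{1/4}\cos(\pi x)$, $e_2=2^{1/4}\sin(\pi x)$, the constants spanned by $e_1^2+e_2^2$ and $\operatorname{span}\{\cos(2\pi x),\sin(2\pi x)\}=\operatorname{span}\{e_1^2-e_2^2,\,e_1e_2\}$, and each $h_i$ of unit norm. The only slip is a harmless constant in your display: in fact $\sqrt{2}\,e_1e_2=\sin(2\pi x)$ (not $2e_1e_2$), which affects neither the span identification nor the norm computation.
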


\begin{rem}
Let us reformulate Corollary \ref{cor:curse} as an integration problem.
As in Section \ref{sec:trig}, we 
denote again $e_1(x)= 2^{1/4} \cos(\pi x)$ and $e_2=2^{1/4} \sin(\pi x)$ on $[0,1]$.
Let $\varphi\in[0,2\pi]$ and let $h(x)=\cos(2\pi x-\varphi), x\in[0,1]$, cf. \eqref{eq:phi1}.
Then 
$$
h(x)=\cos\varphi\cdot\frac{e_1^2(x)-e_2^2(x)}{\sqrt{2}}+\sin\varphi\cdot\sqrt{2}e_1(x)e_2(x).
$$
Consequently, if we define $S(f)=\langle f,h\rangle$ for $f\in F_1$, it satisfies
$$
S(e_1^2)=\frac{\cos\varphi}{\sqrt{2}},
\quad S(e_2^2)=-\frac{\cos\varphi}{\sqrt{2}},\quad S(\sqrt{2}e_1e_2)=\sin\varphi
$$
and we obtain
$$
S(f)=2\int_0^1 f(x)\cos(2\pi x-\varphi)\,{\rm d}x,\quad f\in F_1.
$$

Similarly, if we denote in Corollary \ref{cor:curse} 
by $I\subset\{1,\dots,d\}$ those indices, for which $h_i$ is given by \eqref{eq:phi1},
then
$$
\mathbf S_d(f)=\langle f,\mathbf h_d\rangle=\int_{[0,1]^d} f(x)\prod_{i\in I} [2\cos(2\pi x_i-\varphi_i)]\,{\rm d}x.
$$
\end{rem}

We finish this section by a couple of open problems.

\begin{OP} 
We conjecture that Theorem \ref{thm:3} holds 
true with $\frac{1}{r}$ instead of $\frac{1}{2r}$ in \eqref{eq:thm:3}, see also \cite[Theorem 1.9]{Inder}. This
would allow
to improve the error bound in Corollary \ref{cor:curse_general} and Corollary \ref{cor:curse} to
\[
e(n,\mathbf S_d)^2 \geq 1 - n \,  2^{-d}.
\]
\end{OP}
\begin{OP} Corollary \ref{cor:curse_general} 
shows the curse for all problems $\mathbf S_d=\langle \cdot, \mathbf h_d\rangle$, where
$\mathbf h_d=\bigotimes_{i=1}^d h_i$ is a tensor product
with all components $h_i$ being unit 
norm functions from either the span of $e_1^2+e_2^2$ or from the span of $e_1^2-e_2^2$
and $e_1e_2.$ Is the same true if we allow arbitrary $h_i\in F_1$?
\end{OP}

\section{Randomly chosen sample points} 
\label{sec:random}

We continue our analysis of high dimensional integration problems.
In the previous sections, we mainly studied the quality of optimal sample points.
Optimal sample points are usually hard to find. 
In this section, we switch our point of view and ask for the quality of random point sets 
$\X_n=\{x_1,\hdots,x_n\}$, where the points $x_1,\hdots,x_n$ are independent and identically distributed
in the domain of integration. 
With this, we continue the studies from \cite{HKNPU1,HKNPU2,KU19} on the quality of random information.
Like for optimal points, one can ask: How many random points do we need to
solve the integration problem up to the error $\varepsilon>0$?
Does this number depend exponentially on $d$, i.e., do we have the curse for random information?
We can use Proposition~\ref{prop:error_vs_definiteness} to obtain the following result
for Lebesgue integration on the unit cube
(where the interval $[0,1]$
may clearly be replaced by any other interval of length~1).

{\begin{thm}\label{thm:curse_rand}
Let $F_1$ be a RKHS on $[0,1]$ 
with (point-wise) non-negative 
and measurable kernel $K_1$ such that $1\in F_1$ 
is the representer of the integral.
We define
\[
 \kappa =   \esssup_{x\in [0,1]} K_1(x,x).
\]
Let $\X_n$ be a set of $n$ independent and uniformly distributed points in $[0,1]^d$.
If $\kappa>1$, 
then for any $\delta>0$ there are constants $c>0$ and $a>1$ such that
  \[
  \E\left[ e(\X_n,\mathbf S_d)^2 \right]
  \ge 1-\delta
 \]
 for all $n,d\in\N$ with $n\le c a^d$.
If $\kappa= 1$, then $e(\X_n,\mathbf S_d)=0$ holds almost surely for all $n,d\in\N$.
\end{thm}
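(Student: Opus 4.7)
\emph{Plan.} I would split into the degenerate case $\kappa = 1$, in which the RKHS collapses to constants, and the main case $\kappa > 1$, treated via Proposition~\ref{prop:error_vs_definiteness} applied to the random kernel matrix after an ANOVA decomposition.

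\emph{The case $\kappa = 1$.} The representer $1 \in F_1$ satisfies $\|1\|_{F_1}^2 = S_1(1) = 1$. Cauchy--Schwarz on the reproducing identity $1 = \langle 1, K_1(x,\cdot)\rangle$ gives $K_1(x,x) \ge 1$ pointwise, so $\kappa = 1$ forces $K_1(x,x) = 1$ almost everywhere. A further Cauchy--Schwarz on $K_1(x,y) = \langle K_1(x,\cdot), K_1(y,\cdot)\rangle$ combined with $\int_0^1 K_1(x,y)\,dx = \langle 1, K_1(y,\cdot)\rangle = 1$ then forces $K_1(x,y) = 1$ a.e. Hence $F_1$ is the one-dimensional span of the constant function $1$, so $\mathbf F_d$ consists of constants only, and any single evaluation integrates every $f \in \mathbf F_d$ exactly; thus $e(\X_n,\mathbf S_d) = 0$ almost surely for every $n \ge 1$.

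\emph{The case $\kappa > 1$.} Since $K_1(x,x) \ge 1$ pointwise but exceeds $1$ on a set of positive measure, $\mu_1 := \int_0^1 K_1(x,x)\,dx > 1$. Writing $K_1 = 1 + L_1$, the kernel $L_1$ is positive semi-definite and satisfies $\int L_1(\cdot,y)\,dx = 0$. Expanding the product $\mathbf K_d = \prod_i(1 + L_1(x^i,y^i))$ and passing to the random $n\times n$ kernel matrix gives
\[
 \mathbf K = J_n + L, \qquad L = \sum_{\emptyset \ne A \subseteq [d]} K_A, \qquad K_A := \Bigl(\prod_{i\in A} L_1(x_j^i,x_k^i)\Bigr)_{j,k},
\]
with $J_n = \mathbf 1 \mathbf 1^T$ and each $K_A \succeq 0$. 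Using independence of coordinates one checks $\mathbb E K_A = (\mu_1 - 1)^{|A|} I_n$, and so $\mathbb E L = (\mu_1^d - 1) I_n$. Proposition~\ref{prop:error_vs_definiteness} applied with $h = 1$ combined with a Sherman--Morrison computation then yields the identity
\[
 e(\X_n,\mathbf S_d)^2 = \frac{1}{1 + \mathbf 1^T L^\dagger \mathbf 1},
\]
and the task reduces to upper-bounding the random scalar $\mathbf 1^T L^\dagger \mathbf 1$ with high probability.

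\emph{The main estimate.} The plan is to fix $a \in (1, \mu_1)$ and show that, with probability at least $1 - \delta/2$, one has $L \succeq a^d I_n$; this yields $\mathbf 1^T L^\dagger \mathbf 1 \le n/a^d \le c$ whenever $n \le ca^d$, hence $e^2 \ge 1/(1+c) \ge 1 - c$ on this event. Combining with $e^2 \ge 0$ always gives $\mathbb E[e^2] \ge 1-\delta$ for $c = c(\delta)$ small enough. To obtain $L \succeq a^d I_n$, I would decompose $L = \mathbb E L + (L - \mathbb E L)$ and bound the fluctuation. A Chernoff-type estimate applied to the sum $\log \mathbf K_d(x_j,x_j) = \sum_i \log K_1(x_j^i,x_j^i)$, whose summands are non-negative with mean $\bar m := \mathbb E \log K_1(x,x)$ satisfying $0 < \bar m < \log \mu_1$ by strict Jensen under $\kappa > 1$, concentrates every diagonal entry above $a^d$ (for any $a < e^{\bar m}$) at an exponential rate, and a union bound over $j \le n \le ca^d$ keeps this controlled. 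The main obstacle is the off-diagonal part: the centered entries of $L$ have variance that can grow exponentially with $d$, so a plain Frobenius/Chebyshev bound on $\|L - \mathbb E L\|_{\mathrm{op}}$ is too weak. I expect the cleanest fix is to avoid controlling the full operator norm and instead exploit the product structure $L = \sum_A K_A$ (each PSD) together with a second-moment/Paley--Zygmund argument on the single scalar $\mathbf 1^T L \mathbf 1$, which together with the identity $\mathbf 1^T L^\dagger \mathbf 1 \ge n^2/\mathbf 1^T L \mathbf 1$ and a matching upper bound via rank-one perturbation is enough to push $\mathbf 1^T L^\dagger \mathbf 1$ below $c$ with the required probability.
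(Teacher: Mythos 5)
Your reduction of the case $\kappa>1$ is fine up to a point: with $h\equiv 1$, Proposition~\ref{prop:error_vs_definiteness} (or the explicit error formula in its proof) together with Sherman--Morrison does give $e(\X_n,\mathbf S_d)^2=(1+\mathbf 1^TL^{-1}\mathbf 1)^{-1}$ whenever $L=\mathbf K-\mathbf 1\mathbf 1^T$ is nonsingular, so it suffices to make $\mathbf 1^TL^{-1}\mathbf 1$ small with high probability. The genuine gap is in your ``main estimate''. You correctly note that an operator-norm bound on $L-\E L$ via second moments fails (the entrywise variances can grow exponentially in $d$), but the substitute you propose does not close the argument: a Paley--Zygmund bound on the scalar $\mathbf 1^TL\mathbf 1$ combined with $\mathbf 1^TL^{\dagger}\mathbf 1\ge n^2/\mathbf 1^TL\mathbf 1$ only controls a \emph{lower} bound for the quantity you must bound from \emph{above}; a large $\mathbf 1^TL\mathbf 1$ is perfectly compatible with a large $\mathbf 1^TL^{\dagger}\mathbf 1$ (take $\mathbf 1$ with a small component along an eigendirection of $L$ with tiny eigenvalue), and ``a matching upper bound via rank-one perturbation'' is not an argument. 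So neither $L\succeq a^dI_n$ nor any upper bound on $\mathbf 1^TL^{\dagger}\mathbf 1$ is actually established. The missing idea --- and the reason the theorem assumes the kernel is pointwise non-negative --- is to control each off-diagonal entry \emph{individually} by Markov's inequality: by Fubini $\E\,\mathbf K_d(x_i,x_j)=\langle \mathbf K_d(x_i,\cdot),\mathbf 1\rangle=1$, so non-negativity gives $\P\bigl(\mathbf K_d(x_i,x_j)>c_2^d/(2n)\bigr)\le 2n/c_2^d$ for a suitable $c_2>1$; a union bound over the at most $n^2$ pairs, together with the lower bound $\mathbf K_d(x_i,x_i)\ge c_2^d$ for all $i$ (which the paper gets from a binomial Chernoff bound on the number of coordinates with $K_1(x,x)\ge c_1>1$, and which your $\log$-moment argument would also give), makes the matrix $\bigl(\mathbf K_d(x_i,x_j)-c_2^d/(2n)\bigr)_{i,j\le n}$ diagonally dominant, hence positive semi-definite by Gershgorin, and Proposition~\ref{prop:error_vs_definiteness} then yields $e(\X_n,\mathbf S_d)^2\ge 1-2n/c_2^d$ on this event; choosing $1<a<c_2$ and $c$ small finishes the proof. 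Without an entrywise bound of this type, your concluding step is a conjecture, not a proof.

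Two smaller points. In the case $\kappa=1$, the inference ``$K_1\equiv 1$ a.e., hence $F_1=\mathrm{span}\{1\}$'' is not valid: the kernel may deviate from $1$ on a null set (e.g.\ $K_1(x,y)=1+g(x)g(y)$ with $g$ supported in a single point still has $\kappa=1$ and represents the integral by $1$, yet $\dim F_1=2$). The conclusion survives with a small repair: for a.e.\ $x$ one has $\Vert K_1(x,\cdot)-1\Vert^2=K_1(x,x)-1=0$, so $f(x)=S_1(f)$ for \emph{all} $f\in F_1$ simultaneously, whence a single random node integrates exactly almost surely (the paper instead applies Proposition~\ref{prop:error_vs_definiteness} directly to the $1\times 1$ matrix). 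Finally, your identity with $L^{\dagger}$ needs $\mathbf 1\in\mathrm{range}(L)$ (it fails for $L=0$), and $\mu_1=\int_0^1K_1(x,x)\,{\rm d}x$ may be infinite since $\kappa=+\infty$ is allowed; both are harmless on the intended event but should be stated with care.
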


This means that the tensor product problem for random information
is either trivial or suffers from the curse of dimensionality.
Note that the case $\kappa<1$ does not occur.
Moreover, if the kernel is continuous, the condition $\kappa>1$ is equivalent to the claim
that $F_1$ is at least two-dimensional.
This will become apparent in the proof.

\begin{proof}
 First note that the initial error is given by
 \[
  e(0,S_1)^2 = \Vert 1\Vert^2 = \langle 1,1\rangle = \int_0^1 1\, {\rm d}x = 1,
 \]
 so that the problem is properly normalized.
 Moreover, since $K_1(x,\cdot)$ can be written as the sum of 
 the orthogonal functions 1 and $K_1(x,\cdot)-1$,
 we have $K_1(x,x)=\Vert K_1(x,\cdot) \Vert^2 \ge \Vert 1\Vert^2=1$
 for all $x\in [0,1]$.
 Thus, $\kappa=1$ means that $K_1(x,x)=1$ almost everywhere.
 If $\kappa>1$, then there is some $c_1>1$ such that
 the set of all $x\in [0,1]$ with $K_1(x,x)\ge c_1$ has
 Lebesgue-measure $p>0$.
 We consider independent and uniformly distributed points 
 $x_1,x_2,\hdots\in[0,1]^d$.
 \medskip
 
 Let $\kappa=1$. Then we have almost surely that $\mathbf K_d(x_1,x_1)=1$ 
 and the one-dimensional matrix $\mathbf K_d(x_1,x_1)-\alpha$ is positive semi-definite if, and only if, $\alpha\le 1$. 
 Proposition~\ref{prop:error_vs_definiteness} yields that $e(x_1,\mathbf S_d)=0$
 as claimed.
 \medskip
 
 Let now $\kappa>1$. For all $i\in\N$, the number of coordinates of 
 $x_i$ with $K_1(x_{i,k},x_{i,k})\ge c_1$ is distributed according to the
 binomial distribution $B(d,p)$.
 Thus, the number of such coordinates is typically $pd$
 and greater than $pd/2$ with high probability. Namely,
 we have ${\mathbf K}_d(x_i,x_i)\ge c_1^{pd/2}$
 with probability at least $1-\exp(-p^2d/2)$.
 We put $c_2=c_1^{p/2}$ such that $c_1^{pd/2}=c_2^d$.
 For different indices $i,j\in\N$, Fubini's theorem yields that
 \[
  \E\,\mathbf  K_d(x_i,x_j) = \E\, \langle \mathbf  K_d(x_i,\cdot), 1 \rangle = 1.
 \]
 Using Markov's inequality and the non-negativity of ${\mathbf K}_d$, 
 this implies that
 $\mathbf K_d(x_i,x_j) \le c_2^d/(2n)$
 with probability at least $1-2n/c_2^d$.
 By a union bound, all these inequalities
 hold simultaneously for all $i,j\le n$
 with probability at least $1-n^3/c_2^d-n\exp(-p^2d/2)$.
 In this case, we have
 \begin{multline*}
  \left| \mathbf K_d(x_i,x_i) - \frac{c_2^d}{2n} 
  \right| - \sum_{j\neq i} \left| \mathbf K_d(x_i,x_j) - \frac{c_2^d}{2n} \right|\\
  \ge \mathbf K_d(x_i,x_i) - \frac{c_2^d}{2n}-\sum_{j\neq i}\Bigl(\mathbf K_d(x_i,x_j) + \frac{c_2^d}{2n}\Bigr)
  > c_2^d - \frac{c_2^d}{2} - \frac{c_2^d}{2} = 0
 \end{multline*}
 for all $i\le n$.
 Therefore, the matrix $(\mathbf K_d(x_i,x_j)-c_2^d/(2n))_{i,j\le n}$ is diagonally dominant
 and hence positive definite by Gershgorin circle theorem \cite[Theorem 6.1.1]{HoJo13}. Proposition~\ref{prop:error_vs_definiteness} implies that
 \[
 e(\X_n,\mathbf S_d)^2\ge 1 - \frac{2n}{c_2^d}
 \]
 with the stated probability.
 This yields the assertion.
\end{proof}}

 Let us now consider two examples.
 The first example shows that random information 
 may be much worse than optimal information.
 We consider the space $F_1$ of affine linear functions on $[0,1]$ 
 with scalar product
 \[
 \langle f, g \rangle 
 = \langle f, g\rangle_2 +
 \langle f^\prime, g^\prime\rangle_2.
 \]
 The kernel of this space is given by
 \[
  K_1(x,y)=1+\frac{12}{13}\left(x-\frac12\right)\left(y-\frac12\right).
 \]
 The tensor product space $\mathbf F_d$ consists of all
 functions $f\colon[0,1]^d\to\R$ which are affine linear in each variable.
 This space satisfies the conditions of Theorem~\ref{thm:curse_rand}.
 On the other hand, the integral of any such function is given by
 its function value at the center of the cube. 
 This means that the integration problem on $\mathbf F_d$ 
 is trivial for optimal information, but suffers from the curse
 of dimensionality if we only have random information.
 \medskip

 As another example,
 let us consider the integration problem on 
 the space $F_1$ of polynomials with degree at most 2 %
 with scalar product
 \[
 \langle f, g \rangle 
 = \langle f, g\rangle_2 + 
 \langle f^\prime, g^\prime\rangle_2 + \langle f^{\prime\prime}, g^{\prime\prime}\rangle_2.
 \]
The tensor product space $\mathbf F_d$ consists of polynomials 
of degree 2 or less in every variable.
 It was raised as an open problem in \cite[Open Problem~44]{NW10} whether
 the integration problem on the tensor product space $\mathbf F_d$
 suffers from the curse of dimensionality.
 For optimal point sets, this question remains unsolved.
 For random point sets, Theorem~\ref{thm:curse_rand} yields the curse of dimensionality. 
 The assumptions are readily verified
 with the kernel $K_1$ given in \cite{NW10}. In fact,
 the reproducing kernel %
 even satisfies the condition
 \begin{equation}\label{cond:large-diagonal}
  \inf_{x\in[0,1]} K_1(x,x) > 1.
 \end{equation}
One may ask 
whether this condition is already enough
to obtain the curse of dimensionality for optimal information.

\begin{OP}
 Let $F_1$ be a RKHS of functions on $[0,1]$
 with non-negative kernel $K_1$ satisfying~\eqref{cond:large-diagonal}
 such that $1\in F_1$ is the representer of the integral.
 Prove or disprove that Lebesgue integration on 
 the tensor product space $\mathbf F_d$ 
 with optimal information suffers from the curse of dimensionality.
\end{OP}

\medskip 
\noindent
{\bf Acknowledgements: } \
We thank Dmitriy Bilyk, Mario Ullrich, Henryk Wo\'zniakowski 
and the referees for valuable comments.
We also thank the Oberwolfach team and the organizers of the workshop
``New Perspectives and Computational Challenges in High Dimensions'' (ID 2006b, February 2020);
much of this work was done and discussed during this workshop.
A.\,Hinrichs and D.\,Krieg gratefully acknowledge the support 
by the Austrian Science Fund (FWF) Project F5513-N26, 
which is a part of the Special Research Program ``Quasi-Monte Carlo Methods: Theory and Applications''.
The research of J.~Vyb\'\i ral was supported by the 
grant P201/18/00580S of the Grant Agency of the Czech Republic
and by the European Regional Development Fund-Project ``Center 
for Advanced Applied Science'' \\
(No. CZ.02.1.01/0.0/0.0/16\_019/0000778).

\thebibliography{99}

\bibitem{Special} G. E. Andrews, R. Askey, and R. Roy, Special Functions,
Encyclopedia Math. Appl., vol. 71, Cambridge University Press, Cambridge, 1999.

\bibitem{DKS}
J. Dick, F. Y. Kuo and I. H. Sloan,
\emph{High-dimensional integration: The quasi-Monte Carlo way}, 
Acta Numerica {\bf 22}, 133--288, 2013.  

\bibitem{HW01} 
F. J. Hickernell and H. Wo\'zniakowski,
\emph{Tractability of multivariate integration for 
periodic functions}, 
J. Complexity {\bf 17}, 660--682, 2001.

\bibitem{HKNPU1}
A.\ Hinrichs, D.\ Krieg, E.\ Novak, J.\ Prochno  and  M.\ Ullrich, 
\emph{Random sections of ellipsoids and the power of random information}, 
submitted, 2019, arXiv:1901.06639.

\bibitem{HKNPU2}
A.\ Hinrichs, D.\ Krieg, E.\ Novak, J.\ Prochno and M.\ Ullrich, 
\emph{On the power of random information}, 
In F.~J.\ Hickernell and P.~Kritzer, editors, {\em Multivariate
  Algorithms and Information-Based Complexity}, 43--64. De Gruyter,
  Berlin/Boston, 2020.

\bibitem{HV} A. Hinrichs and J. Vyb\'\i ral, 
\emph{On positive positive-definite functions and Bochner's Theorem},
J. Complexity {\bf 27}, no.\ 3--4, 264--272, 2011.

\bibitem{HoJo13}
R. A. Horn and C. R. Johnson, \emph{Matrix analysis}, Second edition, Cambridge University Press, Cambridge, 2013

\bibitem{Inder} A. Khare, 
\emph{Sharp uniform lower bounds for the Schur product theorem}, preprint, 2019, arXiv:1910.03537.

\bibitem{KU19}
D.\ Krieg and M.\ Ullrich, 
\emph{Function values are enough for $L_2$-approximation}, 
to appear in Found.\ Comput.\ Math., arXiv:1905.02516.

\bibitem{KSU20} T.\ K\"uhn, W.\ Sickel, T.\ Ullrich, \emph{How anisotropic mixed smoothness 
affects the decay of singular numbers for Sobolev embeddings}, 
J.\ Complexity, in press, https://doi.org/10.1016/j.jco.2020.101523.

\bibitem{Erich} E. Novak, \emph{Intractability results 
for positive quadrature formulas and extremal problems 
for trigonometric polynomials}, J.\ Complexity {\bf 15}, 299--316, 1999.

\bibitem{NSW97}
E. Novak, 
I. H. Sloan and H. Wo\'zniakowski,
\emph{Tractability of tensor product linear operators}, 
J. Complexity {\bf 13}, 387--418, 1997.

\bibitem{NUWZ18}
E. Novak, M. Ullrich, H. Wo\'zniakowski and S. Zhang,
\emph{Reproducing kernels of Sobolev spaces on $\R^d$ 
and applications to embedding constants and tractability},
Anal. and Appl. {\bf 16}, 693--715, 2018.

\bibitem{NW01} 
E. Novak and H. Wo\'zniakowski, 
\emph{Intractability results for integration and discrepancy}, 
J. Complexity {\bf 17}, 388--441, 2001. 

\bibitem{NW08} E. Novak and H. Wo\'zniakowski, 
\emph{Tractability of Multivariate Problems, Volume I: Linear Information},
EMS Tracts in Mathematics {\bf 6}, European Math. Soc. Publ. House, Z\"urich, 2008.

\bibitem{NW10} E. Novak and H. Wo\'zniakowski, 
\emph{Tractability of Multivariate Problems, Volume II: 
Standard Information for Functionals}, 
EMS Tracts in Mathematics {\bf 12}, European Math. Soc. Publ. House, Z\"urich, 2010.

\bibitem{NW16} 
E. Novak and H. Wo\'zniakowski, 
\emph{Tractability of multivariate problems for standard and linear information 
in the worst case setting: Part I},
J.~Approx. Th. {\bf 207}, 177--192, 2016.

\bibitem{NW18} 
E. Novak and H. Wo\'zniakowski, 
\emph{Tractability of multivariate problems for standard and linear information 
in the worst case setting: Part II}.
In: 
Josef Dick, Frances Y. Kuo and Henryk Wo\'zniakowski (eds.), 
Contemporary Computational Mathematics -- 
A celebration of the 80th birthday of Ian Sloan, Springer, 2018.

\bibitem{PW10} A. Papageorgiou and H. Wo\'zniakowski, \emph{Tractability through increasing smoothness},
J.\ Complexity {\bf 26}, 409--421, 2010.

\bibitem{Schoen}
I. J. Schoenberg, \emph{Positive definite functions on spheres},
Duke Math. J. {\bf 9} (1942), 96--107

\bibitem{SW97}
I. H. Sloan and H. Wo\'zniakowski,
\emph{An intractability result for multiple integration}, 
Math. Comp. {\bf 66}, 1997, 1119--1124.

\bibitem{Vy19} 
J. Vyb\'\i ral, 
\emph{A variant of Schur's product theorem and its applications}, 
Adv. Math. 368 (2020), 107140.

\bibitem{WW95}
G. W. Wasilkowski and H. Wo\'zniakowski, 
\emph{Explicit cost bounds of algorithms for 
multivariate tensor product problems}, 
J.~Complexity {\bf 11}, 1--56, 1995. 

\end{document}